\begin{document}

\newtheorem{theorem}{Theorem}[section]
\newtheorem{lemma}[theorem]{Lemma}
\newtheorem{corollary}[theorem]{Corollary}
\newtheorem{conjecture}[theorem]{Conjecture}
\newtheorem{proposition}[theorem]{Proposition}
\theoremstyle{definition}
\newtheorem{definition}[theorem]{Definition}
\newtheorem{example}[theorem]{Example}
\newtheorem{claim}[theorem]{Claim}
\newtheorem*{ert}{Erd\H{o}s--R\'enyi theorem}
\newtheorem*{cvt}{Cohomology vanishing theorem}
\newtheorem*{sgt}{Spectral gap theorem}
\newtheorem*{wht}{Wielandt--Hoffman theorem}
\newtheorem*{Zuk}{Spectral criterion for property (T)}

\newcommand{\EL}{{ L}}
\newcommand{\R}{\mathbb{R}}
\newcommand{\Q}{\mathbb{Q}}
\newcommand{\Z}{\mathbb{Z}}
\newcommand{\homol}{\widetilde{H}}
\newcommand{\prob}{\mathbb{P}}
\newcommand{\expect}{\mathbb{E}}
\newcommand{\var}{\mbox{Var}}
\newcommand{\cov}{\mbox{Cov}}
\newcommand{\lk}{\mbox{lk}}
\newcommand{\Pois}{\mbox{Pois}}
\newcommand{\Ns}{N_{\sigma}}
\newcommand{\prop}{\mathcal{P}}

\title[]{Sharp vanishing thresholds for cohomology of random flag complexes}
\author{Matthew Kahle}
\email{Matthew Kahle <mkahle@math.ohio-state.edu>}
\address{Department of Mathematics \\ Ohio State University\\ Columbus OH \\ 43202}
\thanks{The author acknowledges NSA grant \#H98230-10-1-0227 for partial support.}
\keywords{Random topology, sharp thresholds}
\subjclass{55U10, 05C80, 60B20}
\date{\today}
\maketitle

\begin{abstract} For every $k \ge 1$, the $k$th cohomology group $H^k(X, \Q)$ of the random flag complex $X \sim X(n,p)$ passes through two phase transitions: one where it appears, and one where it vanishes. We describe the vanishing threshold and show that it is sharp. Using the same spectral methods, we also find a sharp threshold for the fundamental group $\pi_1(X)$ to have Kazhdan's property~(T). Combining with earlier results, we obtain as a corollary that for every $k \ge 3$ there is a regime in which the random flag complex is rationally homotopy equivalent to a bouquet of $k$-dimensional spheres.

\end{abstract}

\section{Introduction}

The edge-independent random graph $G(n,p)$ is a model of fundamental importance in combinatorics, probability, and statistical mechanics. Sometimes called the Erd\H{o}s--R\'enyi model, this is defined as the probability distribution over all graphs on vertex set $[n] := \{ 1, 2, \dots, n \}$ where every edge is included with probability $p$, jointly independently --- in other words for every graph $G$ on vertex set $[n]$ with $e$ edges,
$$\prob(G) = p^e (1-p)^{ {n \choose 2} - e}.$$
We use the notation $G \sim G(n,p)$ to indicate that $G$ is a graph chosen according to this distribution.

In random graph theory, one is usually concerned with asymptotic behavior as $n \to \infty$, and it is often convenient to write $p$ as a function of $n$. For a graph property $\prop$ we say that $G \in \prop$ {\it with high probability (w.h.p.)}, if $$\prob( G \in \mathcal{P}) \to 1$$ as the number of vertices $n \to \infty$. Throughout this article, whenever we use big-$O$, little-$o$ asymptotic notation, it is also always understood as the number of vertices $n \to \infty$.

\medskip

For a monotone graph property $\mathcal{P}$, i.e. a property closed under addition of edges, a function $\bar{p}$ is said to be a {\it sharp threshold for property $\prop$} if for every fixed $\epsilon > 0$, whenever $p \ge (1+ \epsilon) \bar{p}$, w.h.p.\ $G \in \mathcal{P}$, and whenever $p \le (1 - \epsilon) \bar{p}$, w.h.p.\ $G \notin \mathcal{P}.$



In a seminal theorem in random graph theory, Erd\H{o}s and R\'enyi exhibited a sharp threshold for connectivity \cite{ER}.


\clearpage

\begin{ert}\label{thm:ER} Suppose that $\epsilon > 0$ is fixed and $G \sim G(n,p)$. 
\begin{enumerate}
\item If $$p \ge  \frac{(1 + \epsilon) \log{n}}{n},$$
 then w.h.p.\ $G$ is connected,
 \item and if  $$p \le  \frac{(1 - \epsilon) \log{n} }{n},$$ then w.h.p. $G$ is disconnected.
\end{enumerate}
\end{ert}

\bigskip


A {\it flag complex} is a simplicial complex which is maximal with respect to its underlying $1$-skeleton.  This is also sometimes called a {\it clique complex} since the faces of the simplicial complex correspond to complete subgraphs.  For a graph $H$, let $X(H)$ denote the associated flag complex.  

We are interested here in the expected topological properties of the flag complex of a random graph. Define $X(n,p)$ to be the probability distribution over flag complexes on vertex set $[n]$ where the distribution on the $1$-skeleton agrees with $G(n,p)$. We use the notation $X \sim X(n,p)$ to mean a flag complex chosen according to this distribution. This puts a measure on a wide range of possible topologies --- indeed, every simplicial complex is homeomorphic to a flag complex, e.g.\ by barycentric subdivision.

\bigskip 

The following is our main result, which may be seen as a generalization of the Erd\H{o}s--R\'enyi theorem to higher dimensions.

\begin{theorem} \label{thm:main} Let $k \ge 1$ and $\epsilon > 0$ be fixed, and $X \sim X(n,p)$. 
\begin{enumerate}
\item If $$p \ge  \left( \frac{\left(\frac{k}{2} + 1+\epsilon \right)\log{n}}{n} \right)^{1/(k+1)},$$
 then w.h.p.\ $H^k(X, \Q)  =0$,
 \item and if  $$ \left( \frac{k+1 + \epsilon}{n} \right) ^{1/k} \le p \le  \left( \frac{\left(\frac{k}{2} + 1-\epsilon \right)\log{n}}{n}\right)^{1/(k+1)},$$ then w.h.p.\ $H^k(X, \Q)  \neq 0.$
\end{enumerate}
\end{theorem}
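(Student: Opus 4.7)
The two parts call for quite different tools. For the non-vanishing half (2), the lower range $p \ge ((k+1+\epsilon)/n)^{1/k}$ is essentially the appearance threshold for $H^k$, which follows from the author's earlier work together with a second-moment computation: in this regime, with high probability there are many nontrivial $k$-cycles that are not boundaries, so $\beta_k > 0$. The main novelty is the vanishing half (1), and that is where I would focus the new argument.

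\textbf{Vanishing via Garland.} The key input is the Cohomology vanishing theorem (cvt) stated below, a Garland-type spectral criterion which reduces $H^k(X, \Q) = 0$ to a uniform condition $\lambda_2(\lk \sigma) > c_k$ on every $(k-1)$-simplex $\sigma$ of $X$, for an explicit constant $c_k < 1$. The link of such a $\sigma$ in $X \sim X(n,p)$ is itself a random flag complex on the $\Ns$ common neighbors of $\sigma$, with $1$-skeleton distributed as $G(\Ns, p)$, where $\Ns \sim \mathrm{Bin}(n-k, p^k)$. A Chernoff bound gives uniform concentration $\Ns = (1+o(1))np^k$ over all $\binom{n}{k}$ potential simplices, so the average degree $\Ns p = (1+o(1))np^{k+1}$ of each link exceeds $(k/2 + 1 + \epsilon/2)\log n$ under the hypothesis. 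I would then invoke the Spectral gap theorem (sgt), whose proof compares the random adjacency matrix to its expectation $p(J-I)$ and controls $1 - \lambda_2$ by the operator-norm deviation via the Wielandt--Hoffman theorem (wht). The goal is to show that a single link violates the Garland bound with probability $o(n^{-k})$; a union bound over all $(k-1)$-simplices then yields (1).

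\textbf{Main obstacle.} The hard step is recovering the \emph{sharp} constant $k/2 + 1$ rather than some larger $C_k$. A rough application of Garland plus standard spectral concentration already gives \emph{some} threshold of the form $(C_k \log n / n)^{1/(k+1)}$; pinning down the precise constant demands a quantitative spectral-gap estimate whose large-deviation rate for $1 - \lambda_2(G(N, p))$ balances exactly against the $k \log n$ cost of the union bound over $(k-1)$-simplices. Matching those two exponents is what produces the coefficient $k/2 + 1$, and it is the part of the proof where the Wielandt--Hoffman bookkeeping has to be done with care.
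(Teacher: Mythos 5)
Your treatment of part (1) does match the paper's strategy at a high level: apply the Garland--Ballman--\'Swi\k{a}tkowski vanishing criterion to the $(k+1)$-skeleton, show the links of $(k-1)$-faces are random graphs $G(\Ns, p)$ with $\Ns$ concentrated around $np^k$, invoke the Hoffman--Kahle--Paquette spectral gap theorem, and take a union bound. Two corrections there. First, you misplace the Wielandt--Hoffman theorem: in the paper, the spectral gap theorem is taken as a black box, and Wielandt--Hoffman is not used in part (1) at all; it appears only in the non-vanishing argument (Lemma~\ref{lem:del}), to control how the link's spectral gap shifts when a single edge is deleted. Second, you omit the pure-dimensionality hypothesis of the cohomology vanishing theorem, which the paper handles separately (Lemma~\ref{lem:pure}) by showing that w.h.p.\ there are no maximal $(k+1)$-cliques when $p \ge \bar p$; without this, the Garland criterion does not apply. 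The union bound is also done slightly differently than you suggest: the paper sets $\alpha = k(k+3)/2$ and sums $o(\Ns^{-\alpha})$ over the roughly $n^k p^{\binom{k}{2}}$ faces, getting $o\left(\left(np^{k+1}\right)^{-k(k+1)/2}\right) = o(1)$; your proposal to aim for a per-link failure probability of $o(n^{-k})$ is in the right spirit but not the bookkeeping the paper actually uses to extract the constant $k/2+1$.

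The larger gap is in part (2), which you dismiss as ``the appearance threshold following from earlier work plus a second moment.'' That is not right. The nontrivial content of part (2) is at the \emph{upper} end of the range, where the paper improves the earlier bound $p = o(n^{-1/(k+1)})$ to $p \le \left(\tfrac{(k/2+1-\epsilon)\log n}{n}\right)^{1/(k+1)}$, i.e.\ all the way up to the vanishing threshold. The mechanism is specific: the obstruction is isolated $k$-faces, which correspond to maximal $(k+1)$-cliques. Lemma~\ref{lem:smallp} (a second-moment argument on maximal clique counts) shows these exist w.h.p.\ in the stated range, giving $k$-cocycles. To show they are not coboundaries, the paper uses excision to reduce to showing $H^{k-1}(X - \sigma, \Q) = 0$ for every maximal $k$-face $\sigma$, and proves this (Lemma~\ref{lem:del}) by rerunning the Garland argument on $X-\sigma$, using Wielandt--Hoffman to show that deleting an edge from a link perturbs the spectral gap by only $O(n^{-k\epsilon}) = o(1)$. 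Finally, the lower portion of the range, $\left(\tfrac{k+1+\epsilon}{n}\right)^{1/k} \le p \le n^{-(1/k - \epsilon)}$, is handled by a separate and easier Morse-inequality argument ($\beta_k \ge f_k - f_{k-1} - f_{k+1}$). None of this excision/perturbation structure appears in your proposal, so as written it does not prove the non-vanishing half near the sharp threshold.
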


By the universal coefficient theorem, $H^k(X, \Q)$ is isomorphic to $H_k( X, \Q)$, so these results apply for $k$th homology as well.
%

\medskip

We see immediately that each cohomology group $H^k$ passes through two phase transitions: one where nontrivial cohomology appears, and one where it disappears. The correct exponent for the first phase transition was found earlier in \cite{clique} and this article is mostly concerned with the second phase transition. It still seems reasonable to describe this threshold to be ``sharp'' in the sense described above and analogously to the Erd\H{o}s--R\'enyi theorem, even though the property of $k$th cohomology vanishing is not monotone for $k \ge 1$.

\medskip  

Using the same methods, we also exhibit a sharp threshold for the fundamental group $\pi_1(X)$ to have Kazhdan's property~(T).

\begin{theorem} \label{thm:propT}
Let $\epsilon > 0$ be fixed and $X \sim X(n,p)$.
\begin{enumerate}
\item If 
$$p \ge  \left( \frac{\left(\frac{3}{2} +\epsilon \right)\log{n}}{n} \right)^{1/2},$$
 then w.h.p.\ $\pi_1(X)$ has property~(T), and if
\item $$ \frac{1 + \epsilon}{n } \le p \le  \left( \frac{\left(\frac{3}{2}-\epsilon \right)\log{n}}{n}\right)^{1/2},$$
then w.h.p.\ $\pi_1(X)$ does not have property~(T).
\end{enumerate}
\end{theorem}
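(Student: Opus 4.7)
The plan is to tackle parts (1) and (2) with distinct strategies, connected by the fact that property~(T) of a discrete group forces the vanishing of first cohomology with real coefficients. For part (1), I would apply the spectral criterion for property~(T) to the $2$-skeleton of $X$: if every vertex link has normalized Laplacian with second-smallest eigenvalue strictly greater than $1/2$, then $\pi_1(X)$ has property~(T). The link of a vertex $v$ in $X(n,p)$ is the flag complex on the neighborhood $N(v)$, whose $1$-skeleton is the induced subgraph $G[N(v)]$. Conditional on $|N(v)|=m$, this subgraph is distributed as $G(m,p)$, and a Chernoff bound with a union bound over $v$ ensures $|N(v)| = (1+o(1))np$ uniformly w.h.p., so the task reduces to a uniform spectral gap estimate for $n$ copies of $G(np, p)$.

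The obstruction that drives the threshold is an isolated vertex in a link, which instantly forces the spectral gap to zero. A direct first-moment computation gives
$$\expect\bigl[\#\{(v,u)\,:\,u\in N(v),\ u\text{ isolated in }\lk(v)\}\bigr] = n(n-1)\,p(1-p^2)^{n-2} = n^{3/2-c+o(1)},$$
when $p = \sqrt{c\log n/n}$, so for $c = 3/2+\epsilon$ this count tends to $0$ and no link has an isolated vertex w.h.p. This is exactly where the constant $3/2$ arises. On this good event, each link has minimum degree concentrated near $np^2 = (3/2+\epsilon)\log n$, and one compares its normalized Laplacian $L$ to the expected Laplacian $\bar L$ (whose nontrivial eigenvalues are all $1$) via the Wielandt--Hoffman theorem $\sum_i (\lambda_i(L) - \lambda_i(\bar L))^2 \le \|L - \bar L\|_F^2$. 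Concentration of the degrees and of the adjacency matrix shows the right-hand side is small enough w.h.p.\ to pin every nontrivial eigenvalue of $L$ strictly above $1/2$. A union bound over the $n$ links, combined with the spectral criterion, then yields part (1).

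For part (2), I use the classical fact that for any discrete group $\Gamma$ with property~(T), $H^1(\Gamma,\R)=0$. Since $H^1(X,\R) \cong \mathrm{Hom}(\pi_1(X), \R)$ depends only on $\pi_1(X)$, property~(T) of $\pi_1(X)$ would force $H^1(X,\R)=0$, hence also $H^1(X,\Q)=0$. But the hypothesis on $p$ in part (2) is precisely the range covered by Theorem~\ref{thm:main} with $k=1$, which yields $H^1(X,\Q)\ne0$ w.h.p. This contradiction shows that $\pi_1(X)$ fails property~(T) w.h.p.

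The main obstacle is running the Wielandt--Hoffman step sharply enough: one needs not merely a positive spectral gap but a gap bounded strictly above $1/2$, with constants tight enough that the isolated-vertex threshold $c = 3/2$ is not eroded by some other source of eigenvalue deviation. This requires careful tracking of the concentration of both the degree sequence and the centered adjacency matrix of the link, uniformly over all $n$ vertices, and then a quantitative conversion of the $\ell^2$ eigenvalue bound from Wielandt--Hoffman into an $\ell^\infty$ bound that beats $1/2$ --- precisely the kind of estimate packaged as the paper's named \emph{Spectral gap theorem}.
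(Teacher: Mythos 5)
Your part (2) is essentially the paper's argument: property~(T) forces finite abelianization, hence $H^1(X,\Q)=0$, contradicting Theorem~\ref{thm:main}(2) for $k=1$. Your high-level plan for part (1) also matches the paper's (apply \.Zuk's criterion via spectral gaps of vertex links, each distributed as $G(N_\sigma,p)$), and the first-moment computation for isolated vertices in links is the right heuristic for why the constant is $3/2$. But the concrete mechanism you propose for establishing the spectral gap does not work.

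The gap is in the Wielandt--Hoffman step. For a link $G(m,p)$ with $m\approx np$, the Frobenius norm of $L-\bar L$ is \emph{not} small: each of the $\binom{m}{2}$ off-diagonal entries of $L-\bar L$ is roughly $\frac{1}{(m-1)}\bigl(\tfrac{\mathbf{1}[ij\in E]}{p}-1\bigr)$, so $\expect\|L-\bar L\|_F^2 \approx m^2\cdot\frac{1}{m^2}\cdot\frac{1-p}{p} \approx \frac{1}{p} \to \infty$. Wielandt--Hoffman then gives only $\sum_i|\lambda_i(L)-\lambda_i(\bar L)|^2 \lesssim 1/p$, which is vacuous for bounding a single eigenvalue below $1/2$. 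No amount of ``converting $\ell^2$ to $\ell^\infty$'' can fix this; the Frobenius norm is simply the wrong norm to compare a random Laplacian to its mean. What is actually needed is an operator-norm concentration bound (in the spirit of F\"uredi--Koml\'os/Feige--Ofek, adapted to the normalized Laplacian and the sparse regime $np^2\sim\log n$), together with a failure probability small enough (of order $o(N_\sigma^{-2})$, i.e.\ $\alpha=2$) to survive a union bound over the $\sim n$ links. This is precisely the content of the Hoffman--Kahle--Paquette spectral gap theorem that the paper invokes as a black box, and it requires genuinely different random-matrix machinery than Wielandt--Hoffman. The paper does use Wielandt--Hoffman, but only in Lemma~\ref{lem:del}, where $A$ and $B$ differ by a single edge so the Frobenius norm really is $o(1)$. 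To make your write-up correct you would either need to cite the spectral gap theorem outright (as the paper does) and verify its hypothesis $p\ge\frac{(\alpha+1)\log N_\sigma+C_\alpha\sqrt{\log N_\sigma}\log\log N_\sigma}{N_\sigma}$ uniformly over links with $\alpha=2$ (the content of the paper's Lemma~\ref{lem:pbig}), or reproduce the operator-norm argument, which is substantially harder.
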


\medskip

Combining Theorem \ref{thm:main} with several earlier results discussed below, we obtain the following corollary.

\begin{corollary} {\label{cor:ddim}} 
Let $X \sim X(n,p)$.
\begin{enumerate}
\item Let $k \ge 1$ and $\epsilon > 0$ be fixed.  If
$$ \left( \frac{ \left( \frac{k}{2}+1 + \epsilon \right) \log n }{n} \right)^{1/k}  \le p \le \frac{1}{n^{1/(k+1)+ \epsilon}},$$
then w.h.p. $$\widetilde{H}_i (X, \Q)  = 0 \text{ unless } i = k,$$
in which case  $$\widetilde{H}_i (X, \Q) \neq 0.$$
\item Let $k \ge 3$ and $\epsilon > 0$ be fixed.
If $$ \left( \frac{ \left(C_k + \epsilon \right) \log n }{n} \right)^{1/k}  \le p \le \frac{1}{n^{1/(k+1)+ \epsilon}},$$
where $C_3 = 3$ and $C_k =   k/2+1$ for $k > 3$, then w.h.p.\ $X$ is rationally homotopy equivalent to a bouquet of $k$-dimensional spheres.
\end{enumerate}
\end{corollary}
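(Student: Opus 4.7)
I would prove Part~(1) one cohomological degree at a time. For $i=k$, the task is to verify that the corollary's range for $p$ lies strictly inside the nontriviality range of Theorem~\ref{thm:main}(2): the lower bound $((k/2+1+\epsilon)\log n/n)^{1/k}$ exceeds $((k+1+\epsilon'')/n)^{1/k}$ thanks to the extra $\log n$ factor, while the upper bound $n^{-1/(k+1)-\epsilon}$ is asymptotically below $((k/2+1-\epsilon'')\log n/n)^{1/(k+1)}$. For $1\le i\le k-1$, I would apply Theorem~\ref{thm:main}(1) with $i$ in place of $k$: the corollary's lower bound on $p$ dominates the required threshold $((i/2+1+\epsilon')\log n/n)^{1/(i+1)}$, because $1/(i+1)\ge 1/k$, $\log n/n<1$, and $k/2+1>i/2+1$ all push the inequality in the right direction. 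Connectivity ($i=0$) is handled by the Erd\H{o}s--R\'enyi theorem, since the given $p$ is much larger than $\log n/n$. For $i\ge k+1$, I would cite Kahle's earlier vanishing theorem in ``Topology of random clique complexes'', which gives $\widetilde H_i(X,\Q)=0$ w.h.p.\ whenever $p=o(n^{-1/(k+1)})$; this is implied by the corollary's upper bound $p\le n^{-1/(k+1)-\epsilon}$.

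For Part~(2), I would combine Part~(1) with a simple-connectivity statement and then apply rational homotopy theory. The constant $C_k$ is picked so that $(C_k\log n/n)^{1/k}$ asymptotically exceeds the property~(T) threshold $((3/2+\epsilon')\log n/n)^{1/2}$ of Theorem~\ref{thm:propT} for every $k\ge 3$ (the exponents $1/k$ versus $1/2$ decide the comparison); hence w.h.p.\ $\pi_1(X)$ has Kazhdan's property~(T), in particular with finite abelianization, consistent with $H_1(X,\Q)=0$ from Part~(1). I would then invoke an earlier simple-connectivity result for $X(n,p)$ in this regime to upgrade to $\pi_1(X)=1$ w.h.p.; the extra slack implicit in $C_3=3>5/2$ is meant to accommodate this citation at the edge $k=3$. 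Once $X$ is simply connected with $\widetilde H_*(X,\Q)$ concentrated in degree $k\ge 3$, the rational Hurewicz theorem gives $\pi_k(X)\otimes\Q\cong H_k(X,\Q)$; picking maps $S^k\to X$ representing a $\Q$-basis of $H_k(X,\Q)$ and wedging yields $f:\bigvee_{\beta_k}S^k\to X$ inducing an isomorphism on rational homology, which by the rational Whitehead theorem is a rational homotopy equivalence.

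The main technical obstacle is pinning down the simple-connectivity statement for $X(n,p)$ in the required range, since Theorem~\ref{thm:propT} alone yields only property~(T) of $\pi_1(X)$ rather than its triviality. The rest of the corollary is essentially bookkeeping: deciding which vanishing or nontriviality threshold from Theorem~\ref{thm:main}, Theorem~\ref{thm:propT}, or the earlier literature applies in each cohomological degree, and verifying that the polynomial-and-logarithm comparisons go the right way.
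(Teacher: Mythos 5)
Your Part (1) plan is correct and is what the paper intends: the nontriviality at $i=k$ from Theorem~\ref{thm:main}(2), the vanishing for $1\le i\le k-1$ from Theorem~\ref{thm:main}(1), connectivity from the Erd\H{o}s--R\'enyi theorem, and the vanishing for $i\ge k+1$ from Theorem~\ref{thm:prev3} (Theorem~3.6 of the clique complexes paper); the exponent comparisons you describe all go through. Your Part (2) endgame (rational Hurewicz plus rational Whitehead, i.e.\ Serre's theorems) is exactly what the paper cites.

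However, the detour through property~(T) in Part~(2) is a red herring and the stated rationale for $C_k$ is wrong. Property~(T) of $\pi_1(X)$ does not yield triviality of $\pi_1(X)$, and in fact it is a strictly weaker conclusion than simple-connectivity in the present range; it plays no role here. The constant $C_k$ is not chosen to dominate the property~(T) threshold $\bigl((3/2+\epsilon)\log n/n\bigr)^{1/2}$; rather, $C_3=3$ is chosen precisely to match the $k=1$ case of Theorem~\ref{thm:prev1} (Theorem~3.4 of \cite{clique}), which states that $p\ge\bigl((3+\epsilon)\log n/n\bigr)^{1/3}$ implies $X$ is simply connected w.h.p.\ --- this is the ``earlier simple-connectivity result'' you say you would need to invoke, and it is already stated in the paper's introduction, so it is not actually an obstacle. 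For $k>3$ the exponent $1/k<1/3$ forces $\bigl((k/2+1+\epsilon)\log n/n\bigr)^{1/k}\gg\bigl((3+\epsilon')\log n/n\bigr)^{1/3}$ automatically, which is why $C_k=k/2+1$ (the vanishing-threshold constant) suffices there, and $k=3$ is the only degree where the constant must be bumped. With this correction, your outline aligns with the paper's.
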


The rational homotopy statement follows from Theorem \ref{thm:main} and the $k=1$ case of the earlier Theorem \ref{thm:prev1} by standard results in rational homotopy theory, and in particular by Serre's generalizations of the Hurewicz and Whitehead theorems \cite{Serre53}; see for example Wofsey's explanation on Mathoverflow \cite{waffle}. The reason for $C_3 = 3$ is that this is sufficient to ensure that $\pi_1(X)=0$ w.h.p.

For comparison, standard results on the size of maximal cliques in random graphs give that for $p$ as in Corollary \ref{cor:ddim}, w.h.p.\ the dimension $d = \dim X$ of the complex itself is either $2k$ or $2k+1$, where $d=2k$ w.h.p.\ if $p = o \left( n^{-2/(2k+1) } \right)$ and $d=2k+1$ w.h.p.\ if $p = \omega \left( n^{-2/(2k+1)} \right)$. In other words, for ``most'' choices of $p$, w.h.p.\ the only nontrivial rational homology of a random $d$-dimensional flag complex is in middle degree $\lfloor d/2 \rfloor$.


\bigskip

\subsection{Earlier work}

Recall that a topological space $T$ is said to be {\it $k$-connected} if $\pi_i (T) = 0 $ for $i \le k$.

\begin{theorem} \label{thm:prev1}  (Theorem 3.4 in \cite{clique}) Suppose that $k \ge 1$ and $\epsilon >0 $ are fixed and  $X \sim X(n,p)$.
If $$p \ge \left( \frac{(2k+1 + \epsilon)\log n}{n} \right)^{1/(2k+1)}$$ then w.h.p. $X$ is $k$-connected.
\end{theorem}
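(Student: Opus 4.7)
My plan is to reduce the topological claim to the combinatorial condition that w.h.p.\ every subset $S \subseteq [n]$ with $|S| \le 2k+1$ has a common neighbor in $[n]$, and then deduce $k$-connectivity of $X$ by a nerve-theoretic argument.

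For the combinatorial condition, a direct first-moment calculation does the job: for each $j \le 2k+1$, the expected number of $j$-subsets with empty common neighborhood is
\[
\binom{n}{j}(1-p^j)^{n-j} \;\le\; n^j \exp\!\left(-(n-j)\, p^j\right),
\]
and substituting the hypothesis on $p$, together with $p^j \ge p^{2k+1}$ for $j \le 2k+1$, bounds this by $n^{j-(2k+1+\epsilon)+o(1)} \to 0$. A union bound over $j \le 2k+1$ finishes the probabilistic step.

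To pass from the combinatorial property to $k$-connectivity, I would apply the Nerve Lemma to the cover of $X$ by closed stars $\{\overline{\mathrm{st}}(v)\}_{v \in [n]}$. Each closed star is a cone over a link, hence contractible; and the intersection $\bigcap_{v \in S} \overline{\mathrm{st}}(v)$ equals the induced flag subcomplex on $\bigcap_{v \in S} N[v]$, which is nonempty precisely when $S$ has a common closed neighbor, and is itself a flag complex on a smaller vertex set. The common-neighbor condition at depth $2k+1$ forces the nerve to contain the full $2k$-skeleton of the simplex on $[n]$, and an induction on depth of intersections --- using the same condition inherited by induced subcomplexes --- makes each relevant intersection sufficiently highly connected for the Nerve Lemma to deliver $k$-connectivity of $X$.

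The main obstacle I expect is this inductive topological step: tracking precisely how the depth-$(2k+1)$ common-neighbor property propagates through iterated intersections of closed stars, and verifying that the bookkeeping yields exactly $k$-connectivity rather than a weaker statement. The exponent $2k+1$ (rather than, say, $k+1$) is presumably dictated by this propagation, since the Nerve Lemma needs connectivity control both of the nerve and of every nonempty intersection at every depth up to roughly $k$; any cheaper estimate on $p$ would let the induction fail at the bottom level.
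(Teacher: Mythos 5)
This theorem is cited, not proved, in the present paper --- it is Theorem~3.4 of~\cite{clique} --- so there is no internal proof to compare against; I will assess your proposal on its own. Your first-moment bound is correct, and the nerve-lemma strategy with the closed-star cover is indeed the route taken in~\cite{clique}. But the obstacle you flag at the end is not just bookkeeping: the clean deterministic implication ``every set of at most $2k+1$ vertices has a common neighbor $\Rightarrow X(H)$ is $k$-connected'' is in fact \emph{false}. A counterexample for $k=1$: let $V(H)=\{u,v,c,d,a_1,a_2,b_1,b_2\}$ with $u\sim a_1,a_2,b_1,b_2,c$; $v\sim a_1,a_2,b_1,b_2,d$; $c\sim a_1,a_2,b_1,b_2$; $d\sim a_1,a_2,b_1,b_2$; $a_1\sim a_2$; $b_1\sim b_2$; and no other edges. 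One checks directly that every $3$-subset of $V(H)$ has a common neighbor. Yet $X(H)=Y\cup Z$ where $Y=X(H[\{u,c,a_1,a_2,b_1,b_2\}])$ and $Z=X(H[\{v,d,a_1,a_2,b_1,b_2\}])$; each is a cone ($u$, resp.\ $v$, is adjacent to every other vertex of the subgraph), hence contractible, while $Y\cap Z=X(H[\{a_1,a_2,b_1,b_2\}])$ is two disjoint edges. Mayer--Vietoris gives $\widetilde{H}_1(X(H),\Q)\cong\Q$, so $X(H)$ is not simply connected. (Note also that $\overline{\mathrm{st}}(u)\cap\overline{\mathrm{st}}(v)$ is exactly this disconnected $Y\cap Z$, so the nerve-lemma hypotheses fail.)

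Tracking parameters shows precisely where your induction breaks. If every $\le m$-subset of $V(H)$ has a common neighbor and $W_S=\bigcap_{v\in S}N[v]$ with $|S|=t$, then (adjoining $S$ to the test set) every $\le(m-t)$-subset of $W_S$ has a common neighbor inside $W_S$. With $m=2k+1$ this inherited bound $2k+1-t$ does dominate the recursive requirement $2(k-t+1)+1=2k-2t+3$ exactly when $t\ge2$, so the interior of the induction closes; the failure is at the bottom, where a nonempty $(k+1)$-fold intersection must be $0$-connected but the inherited condition only controls $\le k$-subsets of $W_S$, which yields diameter $\le 2$ only when $k\ge 2$. This is the $k=1$, non-adjacent-pair case realized above. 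The argument therefore cannot be funnelled through this one deterministic lemma at the threshold stated in the theorem; one has to exploit that the intersections $\bigcap_{v\in S}N[v]$ are themselves Erd\H{o}s--R\'enyi graphs on $\Theta(np^t)$ vertices and re-apply the probabilistic connectivity estimates (with a union bound over $S$) at each depth of the nerve, as is done in~\cite{clique}.
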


By the Hurewicz Theorem, if $X$ is $k$-connected then $\widetilde{H}_i(X, \Z) = 0$ for $i \le k$. By the universal coefficient theorem, in this case $H^i(X, \Q) = 0$ for $1 \le i \le k$. So part (1) of Theorem \ref{thm:main} improves on the vanishing threshold for $H^i(X, \Q)$, in particular substantially improving the exponent from $1/(2k+1)$ to $1/(k+1)$, which is best possible for rational cohomology.

On the other hand, the exponent $1/3$ gives the rough vanishing threshold for $\pi_1(X)$, as shown by Babson.

\begin{theorem} \label{thm:bab} (Theorem 1.1 in \cite{babson})
If $X \sim X(n,p)$ where $\epsilon > 0 $ is fixed and $$\frac{1 + \epsilon}{n} \le  p \le \frac{1}{n^{1/3+ \epsilon}}$$ then w.h.p.\ $\pi_1(X)$ is a nontrivial hyperbolic group.
\end{theorem}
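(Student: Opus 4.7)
The plan has two parts, separately establishing nontriviality and hyperbolicity of $\pi_1(X)$.

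For nontriviality, apply Theorem~\ref{thm:main}(2) with $k=1$: since $n^{-1/3-\epsilon}$ lies below the vanishing threshold $((3/2 - \epsilon)\log n/n)^{1/2}$ for all sufficiently large $n$, and the hypothesis $p \ge (1+\epsilon)/n$ matches the appearance threshold there, we obtain $H^1(X, \Q) \ne 0$ w.h.p. Hence $H_1(X, \Z) \otimes \Q \ne 0$, so $\pi_1(X)^{\mathrm{ab}}$ is nontrivial, and in particular $\pi_1(X) \ne 1$.

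For hyperbolicity, the idea is to show that the 2-skeleton $X^{(2)}$ — a presentation 2-complex for $\pi_1(X) = \pi_1(X^{(2)})$ whose relators are the boundaries of the 2-faces (triangles) of $X$ — satisfies a linear isoperimetric inequality w.h.p., which by the standard combinatorial criterion is equivalent to Gromov hyperbolicity. Concretely, we seek a constant $C = C(\epsilon)$ such that w.h.p.\ every null-homotopic edge-loop of length $L$ in $X^{(1)}$ is filled by a van Kampen disk of at most $CL$ triangles.

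The key estimate is a first moment / union bound over reduced disk diagrams. For such a diagram $D$ with $A$ triangular faces and boundary length $L$, Euler's formula forces $E = (3A+L)/2$ edges and $V = 1 + (A+L)/2$ vertices. The probability that a given labelled embedding of $D^{(1)}$ into $[n]$ is realized by $X(n,p)$ is $p^E$, and the number of embeddings is at most $n^V$. At $p = n^{-1/3-\epsilon}$ this gives
$$n^V p^E \le n^{1 + L/3 - (3\epsilon/2) A - (\epsilon/2) L},$$
which is $o(1)$ once $A \ge C(\epsilon) L$ and decays rapidly for larger $A$. Multiplying by the at most $K^A$ combinatorial types of planar triangulated disks with parameters $(A,L)$ and summing over $L \ge 1$ and $A > CL$, the total expected count of ``overfat'' reduced van Kampen diagrams is $o(1)$, so w.h.p.\ none exist, delivering the required isoperimetric inequality.

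The main obstacle is the restriction to \emph{reduced} diagrams: without careful treatment of cancellation pairs, folded subdiagrams, and non-simple boundary identifications, a short loop can be ``filled'' by an arbitrarily large diagram and spoil the union bound. This is the same delicacy as in random small-cancellation arguments for Gromov's density model of random groups, and organizing the enumeration of reduced diagrams in the flag-complex context — where the underlying random object is the graph $G(n,p)$ rather than a set of random words — is where the bulk of the work lies.
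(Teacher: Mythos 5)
This theorem is not proved in the paper under review; it is quoted verbatim from Babson's paper \cite{babson} as prior work, so there is no in-paper proof to compare against. Your sketch is therefore best compared against what Babson (and the closely related companion paper \cite{bhk} for Linial--Meshulam $2$-complexes) actually does. At the level of strategy your plan is right: hyperbolicity is established via a linear isoperimetric inequality for van Kampen diagrams over the presentation $2$-complex $X^{(2)}$, proved by a first-moment bound over reduced diagrams, and your Euler-formula bookkeeping $E=(3A+L)/2$, $V=1+(A+L)/2$ and the resulting exponent $1+L/3-(3\epsilon/2)A-(\epsilon/2)L$ at $p=n^{-1/3-\epsilon}$ are correct for an \emph{embedded} planar triangulated disk.

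The genuine gap is exactly where you flag it, and it is not a minor bookkeeping point but the bulk of the proof. The bound ``at most $n^V$ labellings, each realized with probability $p^E$'' applies only to diagram maps $D\to X$ that are injective on vertices and edges. A reduced van Kampen diagram can still map non-injectively: boundary self-identifications, repeated interior vertices, and folding along trees are all compatible with reducedness. If the image has $V'<V$ distinct vertices and $E'<E$ distinct edges, the quantity you must control is $n^{V'}p^{E'}$, and the planar Euler relation $V-E+A=1$ no longer ties $(V',E')$ to $(A,L)$; the favorable exponent can disappear entirely. One must classify what a reduced diagram can collapse to, bound the number of such images of given complexity, and separately handle non-simple boundaries. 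There is also a flag-specific wrinkle you pass over: the relators are not independent random words but the set of all triangles of $G(n,p)$, so occurrences of distinct $2$-cells in a diagram are positively correlated through shared edges, and the union bound has to be phrased in terms of subgraph containment probabilities for the \emph{image} graph rather than a naive product over faces. These are the issues that make the actual arguments in \cite{babson} and \cite{bhk} long and delicate, and your proposal leaves them entirely open.

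There is also a small range mismatch in your nontriviality step. Theorem~\ref{thm:main}(2) with $k=1$ requires $p\ge(2+\epsilon)/n$, strictly stronger than the hypothesis $p\ge(1+\epsilon)/n$ in the statement. For $p$ in the uncovered window $[(1+\epsilon)/n,(2+\epsilon)/n)$ you need a separate elementary argument --- e.g.\ that for $p=c/n$ with $c>1$ the giant component of $G(n,p)$ has excess $\Theta(n)$ while the number of triangles in $G(n,p)$ is $O_P(1)$, so $H_1(X,\Q)\ne 0$ --- which the theorem you invoke does not supply.
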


Theorem \ref{thm:bab} is closely related to the results in \cite{bhk}, where a parallel result is shown for Bernoulli random $2$-complexes studied earlier by Linial and Meshulam \cite{LM}.

\medskip

The following earlier result can be compared with part (2) of Theorem \ref{thm:main}.

\begin{theorem} \label{thm:prev2}  (Theorem 3.8 in \cite{clique}) Suppose that $k \ge 1$ and $X \sim X(n,p)$.
If $$\omega \left( \frac{1}{n^{1/k}}\right) \le p \le o \left( \frac{1}{n^{1/(k+1)}} \right)$$ then w.h.p. $H^k (X, \Q) \neq 0$. 
\end{theorem}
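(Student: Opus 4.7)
The approach is to bound $\dim H_k(X,\Q)$ from below by the elementary linear-algebraic inequality
$$
\dim H_k(X,\Q) \;\ge\; f_k(X) - f_{k-1}(X) - f_{k+1}(X),
$$
where $f_j(X)$ denotes the number of $j$-dimensional faces of $X$. This follows from $H_k = \ker\partial_k / \operatorname{im}\partial_{k+1}$ together with the trivial rank bounds $\operatorname{rank}(\partial_k) \le f_{k-1}$ and $\operatorname{rank}(\partial_{k+1}) \le f_{k+1}$. Because $X$ is a flag complex, $f_j(X)$ equals the number of $(j{+}1)$-cliques in the underlying graph $G \sim G(n,p)$, so $\expect[f_j] = \binom{n}{j+1}p^{\binom{j+1}{2}}$.

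First I would compute the consecutive ratios
$$
\frac{\expect[f_{k-1}]}{\expect[f_k]} \sim \frac{k+1}{np^k}, \qquad \frac{\expect[f_{k+1}]}{\expect[f_k]} \sim \frac{np^{k+1}}{k+2}.
$$
The hypothesis $p = \omega(n^{-1/k})$ forces $np^k \to \infty$, killing the first ratio and moreover ensuring $\expect[f_k] \to \infty$; the hypothesis $p = o(n^{-1/(k+1)})$ forces $np^{k+1} \to 0$, killing the second. Thus $\expect[f_k - f_{k-1} - f_{k+1}] = (1 - o(1))\,\expect[f_k] \to \infty$.

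The remaining step is to pass from these mean bounds to w.h.p.\ statements. For the upper-tail inputs $f_{k-1}$ and $f_{k+1}$ only a one-sided bound is needed, so Markov's inequality suffices: $\prob(f_j \ge \epsilon\,\expect[f_k]) \le \expect[f_j]/(\epsilon\,\expect[f_k]) = o(1)$ for $j \in \{k-1, k+1\}$. For the lower tail of $f_k$ I would apply the second moment method: grouping pairs of $(k{+}1)$-subsets by intersection size $j$ yields the standard expression
$$
\frac{\var(f_k)}{\expect[f_k]^2} \;=\; \sum_{j=2}^{k+1} \Theta\!\left(n^{-j}\,p^{-\binom{j}{2}}\right),
$$
and each summand tends to zero once $np^{(j-1)/2} \to \infty$. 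Since $p = \omega(n^{-1/k})$ gives $np^{(j-1)/2} = \omega\!\left(n^{1-(j-1)/(2k)}\right)$ and $(j-1)/(2k) < 1$ for every $j \le k+1$, the variance is $o(\expect[f_k]^2)$, and Chebyshev then yields $f_k \ge (1-\epsilon)\,\expect[f_k]$ w.h.p. Combining the three tail bounds gives $f_k - f_{k-1} - f_{k+1} > 0$ w.h.p., hence $H_k(X,\Q) \ne 0$, and $H^k(X,\Q) \cong H_k(X,\Q)$ by the universal coefficient theorem.

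The only genuinely calculational step is the variance estimate for $f_k$. The most delicate contribution comes from pairs of $(k{+}1)$-cliques sharing a single edge ($j = 2$), which is of order $(n^2p)^{-1}$ in $\var(f_k)/\expect[f_k]^2$; it is precisely the lower hypothesis $p = \omega(n^{-1/k})$ that makes even this term negligible.
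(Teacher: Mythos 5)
The paper cites Theorem~\ref{thm:prev2} as a prior result (Theorem~3.8 in \cite{clique}) and does not reprove it, so there is no in-paper proof to compare against line by line. However, the paper \emph{does} invoke exactly the approach you use --- the Morse inequality $\beta_k \ge f_k - f_{k-1} - f_{k+1}$ combined with clique-count concentration --- when handling the lower subinterval $\left(\frac{k+1+\epsilon}{n}\right)^{1/k} \le p \le n^{-1/k+\epsilon}$ in its proof of Part~(2) of Theorem~\ref{thm:main}. Your write-up fills in precisely the details the paper treats as ``standard,'' and it correctly observes that, unlike the sharper Part~(2) of Theorem~\ref{thm:main}, this weaker statement is covered by the Morse argument over its \emph{entire} range: the upper hypothesis $p=o(n^{-1/(k+1)})$ is exactly $np^{k+1}\to 0$, which kills the $f_{k+1}/f_k$ ratio, and the lower hypothesis $p=\omega(n^{-1/k})$ is exactly $np^k\to\infty$, which kills $f_{k-1}/f_k$ and guarantees $\expect[f_k]\to\infty$.

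The computation is sound. The rank bound giving $\dim H_k \ge f_k - f_{k-1} - f_{k+1}$ is correct; the expected-ratio asymptotics $\frac{\expect[f_{k-1}]}{\expect[f_k]}\sim\frac{k+1}{np^k}$ and $\frac{\expect[f_{k+1}]}{\expect[f_k]}\sim\frac{np^{k+1}}{k+2}$ are right; and the variance decomposition $\var(f_k)/\expect[f_k]^2 = \sum_j \Theta(n^{-j}p^{-\binom{j}{2}})$, with each term equal to $(np^{(j-1)/2})^{-j}$ and vanishing because $1-(j-1)/(2k)>0$ for all $j\le k+1$ when $k\ge1$, is correct (one small remark: the off-diagonal sum really runs $2\le j\le k$, with $j=k+1$ being the diagonal contribution $\Theta(1/\expect[f_k])$, but the formula evaluates the same either way). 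The one-sided Markov bounds on $f_{k-1}$, $f_{k+1}$ and the Chebyshev lower bound on $f_k$ are used appropriately. One could quibble that for $p$ in this range the degree and clique-count concentration results are entirely routine, so the explicit second-moment calculation is more than one strictly needs, but it is a complete and self-contained argument for the claimed range.

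What this approach does \emph{not} buy you is the stronger Part~(2) of Theorem~\ref{thm:main}, where $p$ is allowed as large as $\left(\frac{(k/2+1-\epsilon)\log n}{n}\right)^{1/(k+1)}$: there $np^{k+1}$ grows like $\log n$, so $f_{k+1}$ dominates $f_k$, the Euler-characteristic bound becomes negative, and the paper must switch to the excision argument of Section~\ref{sec:pfmain2} (isolated maximal $k$-faces plus Lemma~\ref{lem:del}'s Wielandt--Hoffman perturbation of the spectral gap). So your proof matches the intended ``standard'' proof of the cited Theorem~\ref{thm:prev2} but is strictly weaker than the paper's own improvement of it.
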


The upper bound in part (2) of Theorem \ref{thm:main} improves the earlier upper bound from Theorem \ref{thm:prev2} by a roughly logarithmic factor to be essentially best possible. The lower bound in part (2) of Theorem \ref{thm:main} is also a slight improvement on the earlier lower bound in Theorem \ref{thm:prev2}, and the following earlier result shows that the exponent $1/k$ in these lower bounds can not be improved.

\begin{theorem} \label{thm:prev3} (Theorem 3.6 in \cite{clique}) Suppose that $k \ge 1$ and $\epsilon > 0$ are fixed.
If $$p \le  \frac{1}{n^{1/k  + \epsilon}},$$
then w.h.p.\ $H^k (X, \Q) = 0$.
\end{theorem}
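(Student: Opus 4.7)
The plan is to prove $H_k(X, \Q) = 0$ w.h.p.\ by a first moment argument on minimal cycle supports; by the universal coefficient theorem this is equivalent to $H^k(X, \Q) = 0$. Suppose $H_k(X,\Q) \neq 0$, and let $z$ represent a nonzero class with vertex support $W \subseteq [n]$ of minimum cardinality. Any boundary of $z$ in the induced flag subcomplex $X[W]$ is also a boundary in $X$, so $H_k(X[W], \Q) \neq 0$; moreover every vertex of $W$ lies in some $(k+1)$-clique of $X[W]$. The strategy is to show that the expected number of such minimal witnesses $W$ is $o(1)$.

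The combinatorial input I will use is the minimum-vertex bound: any flag complex $Y$ with $H_k(Y, \Q) \neq 0$ has at least $2(k+1)$ vertices, and the extremal case is (the boundary of) the $(k+1)$-dimensional cross-polytope, whose $1$-skeleton is the complete multipartite graph $K_{2,2,\dots,2}$ with $2k(k+1)$ edges and exactly $k+1$ antipodal non-edges. This is folklore in topological combinatorics (any flag complex on fewer vertices necessarily has a cone point, forcing contractibility of the link and hence triviality of $H_k$). For $|W| = 2(k+1)$, the event $H_k(X[W],\Q)\neq 0$ forces $X[W]$ to contain a cross-polytope structure, which, after summing over the $O(1)$ antipodal pairings of $W$, occurs with probability $O(p^{2k(k+1)})$. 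Hence the first moment satisfies
\[
\binom{n}{2(k+1)} \cdot O\bigl(p^{2k(k+1)}\bigr) = O\bigl(n^{2(k+1) - 2(k+1)(1+k\epsilon)}\bigr) = O\bigl(n^{-2k(k+1)\epsilon}\bigr) = o(1).
\]

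The main obstacle lies in the regime $|W| = m > 2(k+1)$, where a richer family of flag complexes (for example wedges or connected sums of spheres) can support nonvanishing $H_k$. Here one needs a structural lemma lower-bounding the number of edges in any flag complex on $m$ vertices with nonvanishing $H_k$ that is \emph{vertex-minimal} in the sense above, ideally on the order of $km$ edges. The heuristic is that by minimality of $W$, every vertex must participate in a $(k+1)$-clique contained in $\operatorname{supp}(z)$ and hence contribute at least $k$ new edges to the $1$-skeleton beyond the cross-polytope core. Establishing this edge lower bound — for instance by an inductive contraction/deletion argument on minimal flag complexes with $H_k \neq 0$, or by a discrete-Morse matching that eliminates inessential $k$-cells — is the primary technical hurdle. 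Given such a lemma, substituting into $\binom{n}{m} \cdot p^{e_{\min}(m)}$ yields a geometric decay in $m$ once $p \le n^{-1/k-\epsilon}$, and summing over $m \ge 2(k+1)$ gives $o(1)$, completing the proof.
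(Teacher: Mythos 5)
Your high-level strategy — first moment over vertex-minimal supports of nontrivial $k$-classes, reducing to a combinatorial lemma about vertex-minimal flag complexes with nonvanishing $H_k$ — is a reasonable one, and your base-case calculation for $|W|=2(k+1)$ is correct. But as written this is not a proof: you explicitly defer the crucial step and label it ``the primary technical hurdle,'' and the heuristic you give for it is not rigorous.

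The good news is that the missing lemma is true and has a short proof you should have been able to supply. Let $W$ be vertex-minimal with $\widetilde H_k(X[W],\Q)\ne 0$, and let $v\in W$. Since flag complexes restrict along induced subgraphs, $X[W]\setminus v = X[W\setminus v]$, and minimality gives $\widetilde H_k(X[W\setminus v],\Q)=0$. Writing $X[W]=\mathrm{st}(v)\cup X[W\setminus v]$ with intersection $\lk(v)$, the Mayer--Vietoris sequence (using contractibility of $\mathrm{st}(v)$) yields an injection $\widetilde H_k(X[W],\Q)\hookrightarrow \widetilde H_{k-1}(\lk(v),\Q)$, so $\widetilde H_{k-1}(\lk(v),\Q)\ne 0$. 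Now $\lk(v)$ is a flag complex on $\deg(v)$ vertices, so by the very same minimum-vertex bound you quoted (one dimension down), $\deg(v)\ge 2k$. Hence every vertex of the $1$-skeleton of $X[W]$ has degree at least $2k$, giving at least $km$ edges — exactly your target.

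Even granting the lemma, there is a second gap you wave away with the phrase ``yields a geometric decay in $m$.'' The natural first-moment bound is
\[
\sum_{m\ge 2k+2}\binom{n}{m}\binom{\binom{m}{2}}{km}p^{km}
\le \sum_{m\ge 2k+2}\Bigl[\,C_k\, m^{k-1}\, n p^k\,\Bigr]^m,
\]
and with $p\le n^{-1/k-\epsilon}$ the bracket is $C_k\,m^{k-1}n^{-k\epsilon}$. For $k=1$ this decays geometrically in $m$ uniformly, but for $k\ge 2$ it does \emph{not} decay once $m$ grows polynomially in $n$ (roughly $m\gtrsim n^{k\epsilon/(k-1)}$), because the number of candidate $m$-vertex graphs with $\ge km$ edges overwhelms $p^{km}$. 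You need either a sharper structural constraint on vertex-minimal bad flag complexes that forces more edges (or more independence) when $m$ is large, or a separate argument ruling out large minimal supports. As it stands, the claimed geometric decay is unjustified, so the proof has a real hole beyond the one you acknowledged.
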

%

\medskip

The proof of Theorem \ref{thm:main} is based on earlier work in group cohomology by Garland \cite{Garland}, and refinements due to Ballman and \'Swi\k{a}tkowski \cite{Ballmann}. See also the S{\'e}minaire Bourbaki by Borel \cite{Borel}, and work of \.Zuk \cite{z03} on thresholds for property~(T) in random groups.

The proof also depends in an essential way on recent work on spectral gaps of Erd\H{o}s--R\'enyi random graphs by Hoffman et al.\ \cite{hkp}, in particular the spectral gap theorem in Section \ref{sec:pfmain}.

\bigskip

The outline for the rest of the paper is as follows. In Section \ref{sec:cliques} we make preliminary calculations on the number of maximal $k$-cliques in random graphs. In Sections \ref{sec:pfmain} and Section \ref{sec:pfmain2} we prove Theorems \ref{thm:main} and \ref{thm:propT}.  In Section \ref{sec:comments} we close with comments and conjectures.

\section{Preliminary calculations for maximal $(k+1)$-cliques} \label{sec:cliques}

We use the standard notation $[n] = \{ 1, 2, \dots, n \}$, and then $${[n] \choose m} =\{ \{1, 2, \dots, m \}, \dots  \}$$ denotes the {\it set} of $m$-subsets of $[n]$, a set of cardinality $n \choose m$.

Let $N_{k+1}$ denote the number of {\it maximal} $(k+1)$-cliques, i.e. $(k+1)$-cliques which are not contained in any $(k+2)$-cliques.  We write $N_{k+1}$ as a sum of indicator random variables, as follows.  For $i \in {[n] \choose k+1}$  let $A_i$ be the event that the vertex set corresponding to $i$ spans a maximal $(k+1)$-clique, and let $Y_i$ be the indicator random variable for the event $A_i$.  Then
$$ N_{k+1} = \sum_{ i \in {[n] \choose k+1} }Y_i.$$

Since the probability that $i$ spans a $(k+1)$-clique is $p ^{k+1 \choose 2}$, and the probability of the independent event that the vertices in $i$ have no common neighbor is $(1-p^{k+1})^{n-k-1}$, we have $$E[Y_i] =  p ^{k+1 \choose 2} (1-p^{k+1})^{n-k-1}.$$
By linearity of expectation we have
$$E[N_{k+1}] = {n \choose k+1} p ^{k+1 \choose 2} (1-p^{k+1})^{n-k-1}.$$

Now suppose  $$p = \left( \frac{ \left( \frac{k}{2}+1 \right) \log{n} + \left( \frac{k}{2} \right) \log\log{n} + c}{n} \right)^{1 / (k+1)},$$
where $c \in \R$ is constant. Then in this case,

\begin{align*}
E[N_{k+1}] & = \sum_{i \in {[n] \choose k+1} }E[Y_i]\\
& = { n \choose k+1}  p ^{k+1 \choose 2} (1-p^{k+1})^{n-k-1}\\
& \approx \frac{n^{ k+1}}{(k+1)!}  p ^{k+1 \choose 2} e^{-p^{k+1}n}\\
& =  \frac{n^{ k+1}}{(k+1)!} \left( \frac{ (\frac{k}{2}  + 1 + o(1) ) \log n}{n} \right)^{k/2} n^{-(k/2+1)} (\log n)^{-k/2} e^{-c},\\
\end{align*}
and then 
\begin{equation} \label{eq:ex}
E[N_{k+1}] \to \frac{ (\frac{k}{2}+1)^{k/2}}{(k+1)!} e^{-c},
\end{equation}
as $n \to \infty$.


\subsection{Zero expectation}

Letting $c \to \infty$ in Equation (\ref{eq:ex}) gives that $E[N_{k+1}] \to 0$.  By Markov's inequality, we conclude the following.

\begin{lemma} \label{lem:bigp} Let $G \sim G(n,p)$, and $N_{k+1}$ count the number of maximal $(k+1)$-cliques in $G$.
If $$ p \ge \left( \frac{ \left( \frac{k}{2}+1 \right) \log{n} + \left( \frac{k}{2} \right) \log\log{n} + \omega(1)}{n} \right)^{1 / (k+1)},$$ 
then $N_{k+1}=0$ w.h.p.
\end{lemma}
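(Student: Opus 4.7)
The plan is essentially to reuse the calculation that has already been set up in the run-up to equation~(\ref{eq:ex}), together with Markov's inequality. Write the threshold as $p_0^{k+1} = \frac{(k/2+1)\log n + (k/2)\log\log n + c(n)}{n}$ with $c(n) \to \infty$ arbitrarily slowly. First I would evaluate $E[N_{k+1}]$ at $p = p_0$ by redoing the asymptotic manipulations that produced $(1.2)$, but allowing the additive constant $c$ to be a function of $n$ tending to infinity. Every step of that reduction is insensitive to the replacement, so the same conclusion
$$E[N_{k+1}] = (1+o(1))\frac{(k/2+1)^{k/2}}{(k+1)!}\, e^{-c(n)} \longrightarrow 0$$
is obtained, and Markov's inequality then gives $\prob(N_{k+1} \ge 1) \le E[N_{k+1}] \to 0$.

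The only genuine subtlety is that $N_{k+1}$ is \emph{not} monotone in $p$ (pushing $p$ upwards both creates new $(k+1)$-cliques and destroys existing maximal ones by extending them), so I cannot deduce the case $p \ge p_0$ from the case $p = p_0$ by a coupling. Instead I would show that $E[N_{k+1}]$, as a function of $p$, is monotonically decreasing throughout the relevant range. A direct differentiation of
$$\log E[N_{k+1}] = \log\binom{n}{k+1} + \binom{k+1}{2}\log p + (n-k-1)\log(1-p^{k+1})$$
gives $\frac{d}{dp}\log E[N_{k+1}] = \binom{k+1}{2}/p - (n-k-1)(k+1) p^k /(1-p^{k+1})$, whose unique zero sits at $p^{k+1} = \Theta(1/n)$. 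Since the lemma's threshold satisfies $p_0^{k+1} = \Theta(\log n / n)$, much larger than this critical value, we are safely on the decreasing branch, and hence $E[N_{k+1}](p) \le E[N_{k+1}](p_0) \to 0$ uniformly for all admissible $p$.

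Combining these two steps: for every $p$ satisfying the hypothesis, $E[N_{k+1}] \to 0$, and Markov's inequality closes the argument. The ``hard'' part is really just this bookkeeping of non-monotonicity, which is resolved by the single calculus computation above; everything else is a direct quotation of the expectation formula and the asymptotic reduction already displayed in the section.
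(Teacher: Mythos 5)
Your proof is correct and follows the same first-moment-plus-Markov route as the paper, whose proof is simply the observation that letting $c \to \infty$ in Equation~(\ref{eq:ex}) drives $E[N_{k+1}]$ to zero. The paper passes over the extension from $p = p_0$ to all $p \ge p_0$ tacitly (in effect absorbing any excess into the $\omega(1)$ term and implicitly relying on $(1-x)^m \le e^{-xm}$ to make the expectation bound hold for $c(n)$ of arbitrary growth), whereas your explicit monotonicity check --- showing $p \mapsto E[N_{k+1}]$ is decreasing once $p^{k+1} \gg 1/n$ --- is a clean and entirely legitimate way to make that reduction rigorous.
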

 
\subsection{Infinite expectation}

Suppose that $$ \omega \left( \frac{1}{n^{2/k}} \right) \le p \le \left( \frac{ \left( \frac{k}{2}+1 \right) \log{n} + \left( \frac{k}{2} \right) \log\log{n} - \omega(1)}{n} \right)^{1 / (k+1)}.$$ 
In this case we have that $E[N_{k+1}] \to \infty$.  By Chebyshev's inequality, if we also have $\var[N_{k+1}] = o \left( E[N_{k+1}]^2 \right),$ then $$\prob[ N_{k+1} > 0] \to 1.$$ (See for example, Chapter 4 of \cite{Alon}.)

So once we bound the variance we have the following.

\begin{lemma} \label{lem:smallp} Let $0 < \epsilon < \frac{1}{k(k+1)}$ be fixed, and $G \sim G(n,p)$.
If $$\frac{1}{n^{1/k - \epsilon}} \le p \le \left( \frac{ \left( \frac{k}{2} + 1 \right) \log{n} + \left( \frac{k}{2} \right) \log\log{n} - \omega(1)}{n} \right)^{1 / (k+1)},$$ then $N_{k+1} > 0$ w.h.p.
\end{lemma}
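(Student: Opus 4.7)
My plan is to control the variance and apply Chebyshev as the author has set up. Writing $N_{k+1} = \sum_i Y_i$, decompose
$$\var[N_{k+1}] \le E[N_{k+1}] + \sum_{i \ne j} \cov(Y_i, Y_j);$$
the first term is $o(E[N_{k+1}]^2)$ because $E[N_{k+1}] \to \infty$, so the task is to bound the off-diagonal sum, which I would partition according to $s := |i \cap j| \in \{0, 1, \dots, k\}$.

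To compute $E[Y_i Y_j]$ for $|i \cap j| = s$, I first multiply in the probability $p^{2\binom{k+1}{2}-\binom{s}{2}}$ that both $i$ and $j$ span cliques, then analyze the per-vertex maximality obstructions. For each $v \notin i \cup j$, the events ``$v$ is not a common neighbor of $i$'' and ``$v$ is not a common neighbor of $j$'' involve disjoint edge sets (edges from $v$ to $i$ versus to $j$) and each contributes an independent factor $1 - p^{k+1}$. For each $v \in i \setminus j$, however, since the edges from $v$ into $i \cap j$ are already forced by $Y_i = 1$, the conditional event ``$v$ is not a common neighbor of $j$'' reduces to ``not all $a := k+1-s$ edges from $v$ to $j \setminus i$ are present,'' and symmetrically for $v \in j \setminus i$. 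These last two families of events live on the same $a \times a$ grid of Bernoulli$(p)$ edges between $i \setminus j$ and $j \setminus i$, as a ``no all-ones row and no all-ones column'' event.

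For $s=0$, the grid has side $k+1$ and a short inclusion--exclusion/union bound shows that the ``no all-ones row or column'' probability differs from the naive product $(1-p^{k+1})^{2(k+1)}$ only by $O(p^{2k+1})$. Thus $\cov(Y_i, Y_j) = O(p^{2k+1}\, E[Y_i]E[Y_j])$, and summing over the $\le \binom{n}{k+1}^2$ disjoint pairs yields a contribution of $O(p^{2k+1})\, E[N_{k+1}]^2 = o(E[N_{k+1}]^2)$. For $1 \le s \le k$, I would use the crude bound $E[Y_iY_j] \le p^{2\binom{k+1}{2}-\binom{s}{2}}$ combined with the pair count $O(n^{2k+2-s})$. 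Dividing by $E[N_{k+1}]^2$, and noting that $(1-p^{k+1})^{-2(k+1-s)} = O(1)$ throughout the stated range, the $s$-contribution to the ratio is $O(n^{-s} p^{-\binom{s}{2}})$, which is $o(1)$ precisely when $p\, n^{2/(s-1)} \to \infty$ (trivial for $s=1$). The hypothesis $p \ge n^{-1/k+\epsilon}$ delivers this since $2/(s-1) \ge 2/(k-1) > 1/k$ for every $s \le k$.

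The main obstacle is bookkeeping rather than conceptual: tracking the several interacting maximality factors, especially the row/column correlation on the bipartite grid between $i \setminus j$ and $j \setminus i$, and checking that the crude $O(\cdot)$ estimates are uniform across the stated range of $p$. I expect the pinch point to be $s=k$, where the $p$-suppression is largest and the pair count is only $O(n^{k+2})$; the sharpness of the lower hypothesis on $p$ comes down precisely to the strict inequality $2/(k-1) > 1/k$.
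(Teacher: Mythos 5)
Your approach is the same as the paper's: a Chebyshev second moment bound, decomposing the off-diagonal covariance sum according to $s=|i\cap j|$. The $s=0$ bipartite-grid analysis is in fact slightly tighter than the paper's (which only gets $O(p^k)$ where you get $O(p^{2k+1})$), and is fine. However, there is a real inconsistency in the $1\le s\le k$ range that you need to fix.

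As literally written, the ``crude bound'' $E[Y_iY_j]\le p^{2\binom{k+1}{2}-\binom{s}{2}}$ \emph{drops the maximality factor entirely}. Dividing that by
$$E[N_{k+1}]^2=\binom{n}{k+1}^2 p^{2\binom{k+1}{2}}\left(1-p^{k+1}\right)^{2(n-k-1)}$$
produces a surviving factor $(1-p^{k+1})^{-2(n-k-1)}\approx e^{2np^{k+1}}$, which at the upper end of the $p$-range is of order $n^{k+2}$. So the ratio you would get from the bound as stated is $O\left(n^{k+2-s}\,p^{-\binom{s}{2}}\right)$, which does \emph{not} tend to zero for $s\le k$. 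The factor $(1-p^{k+1})^{-2(k+1-s)}$ you then invoke as ``$O(1)$'' only appears if you retain the per-vertex maximality constraint in $E[Y_iY_j]$, i.e.\ bound
$$E[Y_iY_j]\le p^{2\binom{k+1}{2}-\binom{s}{2}}\bigl(1-2p^{k+1}+p^{2k+2-s}\bigr)^{\,n-(2k+2-s)},$$
and then compare exponents against $\bigl((1-p^{k+1})^2\bigr)^{n-k-1}$. Doing so, the ratio splits into the harmless $(1-p^{k+1})^{-2(k+1-s)}=O(1)$ you mention, \emph{and} a factor $\approx\exp\bigl(np^{2k+2-s}\bigr)$ coming from $p^{2k+2-s}>p^{2k+2}$ in the respective one-vertex survival probabilities. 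You must check this second factor is $1+o(1)$; it is, because $np^{2k+2-s}=np^{k+1}\cdot p^{k+1-s}=O(\log n)\cdot o(1)\to 0$ using both the upper bound on $p$ and $s\le k$. This is exactly the bookkeeping the paper does in its Case III when it shows $\prob[A_i]\prob[A_j]/\prob[A_i\text{ and }A_j]\le p^{\binom{I}{2}}$. So your plan is sound, but the stated crude bound cannot stand as written.

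One small misreading to flag: the inequality $2/(k-1)>1/k$ is strict and holds with room to spare independently of $\epsilon$, so the variance argument is \emph{not} where the lower hypothesis $p\ge n^{-1/k+\epsilon}$ is tight; that constraint is needed elsewhere (to guarantee $E[N_{k+1}]\to\infty$ in the relevant range). The pinch point is the $s=k$ term, as you say, but it is satisfied with margin.
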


\begin{proof}[Proof of Lemma \ref{lem:smallp}]
As above, write $N_{k+1}$ as a sum of indicator random variables.

$$ N_{k+1} = \sum_{ i \in {[n] \choose k+1} }Y_i.$$

Then $$\var [ N_{k+1} ] \le E[N_{k+1}] + \sum_{i, j \in {[n] \choose k+1} } \cov[Y_i, Y_j] $$ where the covariance is
\begin{align*}
 \cov[Y_i, Y_j] & = E[Y_i Y_j ] - E[Y_i]E[Y_j]\\
 & = \prob[A_i \mbox{ and } A_j] - \prob[A_i] \prob[A_j],
\end{align*}
since $Y_i$ are indicator random variables.

Let $I = I_{i,j} = | i \cap j|$ be the number of vertices in the intersection of subsets $i$ and $j$.  It is convenient to divide into cases depending on the cardinality of $0 \le I < k+1$.

\bigskip

\noindent {\it Case I: 
$$I = 0$$} 

Given two disjoint subsets, $i,j \in {[n] \choose k+1}$,
\begin{align*}
\prob[A_i \mbox{ and } A_j] & = p^{2{k+1 \choose 2}}(1-2p^{k+1}+p^{2k+2} )^{n-2k-2}\left(1 - O \left( p^k \right) \right),\\
\end{align*}
and
\begin{align*}
\prob [A_i] \prob[A_j] &=  \left( p^{{k+1 \choose 2}} ( 1- p^{k+1})^{n-k-1} \right)^2\\
& = p^{2{k+1 \choose 2}} \left( 1- 2p^{k+1}+p^{2k+2}\right)^{n-k-1},\\
& = p^{2{k+1 \choose 2}} \left( 1- 2p^{k+1}+p^{2k+2}\right)^{n-2k-2} \left(1 - 2p^{k+1}+p^{2k+2} \right)^{k+1},\\
& = p^{2{k+1 \choose 2}} \left( 1- 2p^{k+1}+p^{2k+2}\right)^{n-2k-2} \left(1 - O \left( p^{(k+1)^2} \right) \right),\\
\end{align*}
so
\begin{align*}
\prob[A_i \mbox{ and } A_j] - \prob[A_i] \prob[A_j] & = p^{2{k+1 \choose 2}} ( 1- 2p^{k+1}+p^{2k+2})^{n-2k-2} O \left( p^k \right).\\
\end{align*}

The number of vertex-disjoint pairs $i,j$ is $O \left( n^{2k+2} \right)$ so the total contribution $S_0$ to the variance is
$$S_0 = O \left( n^{2k+2}   p^{2{k+1 \choose 2}} ( 1- 2p^{k+1}+p^{2k+2})^{n-k-1} p^k  \right)$$
Compare this to $$E [N_{k+1}]^2 = {n \choose k+1}^2  p^{2{k+1 \choose 2}} ( 1- p^{k+1})^{2(n-k-1)}.$$
Clearly $$ S_0 /  E [N_{k+1}]^2  = O \left (p^k \right),$$
and since $p \to 0$ by assumption, we have that $$S_0 = o \left( E [N_{k+1}]^2 \right),$$ as desired.\\

\noindent{\it Case II: $$I=1$$}

If $I=1$ then
\begin{align*}
\prob[A_i \mbox{ and } A_j] & = p^{2{k+1 \choose 2}}(1-2p^{k+1}+p^{2k+1} )^{n-2k-1}(1 - O(p^k)),\\
\end{align*}
and
\begin{align*}
\prob [A_i] \prob[A_j] &=  \left( p^{{k+1 \choose 2}} ( 1- p^{k+1})^{n-k-1} \right)^2\\
& = p^{2{k+1 \choose 2}} \left( 1- 2p^{k+1}+p^{2k+2}\right)^{n-k-1},\\
& = p^{2{k+1 \choose 2}} \left( 1- 2p^{k+1}+p^{2k+2}\right)^{n-2k-1} \left( 1- 2p^{k+1}+p^{2k+2}\right)^{k}\\
& = p^{2{k+1 \choose 2}} \left( 1- 2p^{k+1}+p^{2k+2}\right)^{n-2k-1} \left( 1- O \left( p^{k(k+1)} \right) \right).\\
\end{align*}
Subtracting, we have 
\begin{align*}
\prob[A_i \mbox{ and } A_j] - \prob[A_i] \prob[A_j] & = p^{2{k+1 \choose 2}} ( 1- 2p^{k+1}+p^{2k+2})^{n-2k-1} O\left(p^k \right).\\
\end{align*}
There are $O \left(n^{2k+1}\right)$ such pairs of events, so
$$S_1 = O \left( n^{2k+1}p^{2{k+1 \choose 2}} ( 1- 2p^{k+1}+p^{2k+2})^{n-2k-1} p^k \right).$$
Compare this to $$E [N_{k+1}]^2 = {n \choose k+1}^2  p^{2{k+1 \choose 2}} ( 1- p^{k+1})^{2(n-k-1)}.$$
Now $$ S_1 /  E [N_{k+1}]^2  = O \left( n^{-1} p^k \right) =o(1),$$
since $n \to \infty$ and $p \to 0$. So we have that $$S_1 = o \left( E [N_{k+1}]^2 \right),$$ as desired.\\

\noindent {\it Case III: 
$$2 \le I \le k$$}

In this case, 
\begin{align*}
\prob[A_i \mbox{ and } A_j] & = p^{2{k+1 \choose 2 }-{I \choose 2} }(1-2p^{k+1}+p^{2k+2-I} )^{n-2k-2+I}(1 - O(p^k)),\\
\end{align*}
and
\begin{align*}
\prob [A_i] \prob[A_j] &=  \left( p^{{k+1 \choose 2}} ( 1- p^{k+1})^{n-k-1} \right)^2\\
& = p^{2{k+1 \choose 2}} ( 1- 2p^{k+1}+p^{2k+2})^{n-k-1}.\\
\end{align*}

Comparing, we have 
\begin{align*}
\frac{\prob [A_i] \prob[A_j] }{\prob[A_i \mbox{ and } A_j] } & \le p^{I \choose 2} \left( 1 + \frac{p^{2k+2} - p^{2k+2-I}}{1-2p^{k+1} + p^{2k+2-I}} \right)^n  \left(1 + o(1) \right) \\
& \le p^{I \choose 2},\\
\end{align*}
and since $p \to 0$ and $I \ge 2 $ by assumption, $$\frac{\prob [A_i] \prob[A_j] }{\prob[A_i \mbox{ and } A_j] } \to 0.$$

So $$\prob[A_i \mbox{ and } A_j]  - \prob [A_i] \prob[A_j] = \left( 1-o(1) \right) \prob[A_i \mbox{ and } A_j],$$
and now we bound the covariance $$ \cov[Y_i, Y_j]$$ by bounding the probability $\prob[A_i \mbox{ and } A_j]$.

For every $2 \le I < k+1$, there are $O \left( n^{2k+2-I} \right)$ pairs of events $i, j$ with vertex intersection of cardinality $I$.

So the total contribution to variance from such pairs is at most
$$S_I = O \left( n^{2k+2-I}   p^{2{k+1 \choose 2} -{I \choose 2}} (1-2p^{k+1}+p^{2k+2-I} )^{n-2k-2+I} \right).$$

Compare this to $$E [N_{k+1}]^2 = {n \choose k+1}^2  p^{2{k+1 \choose 2}} ( 1- p^{k+1})^{2(n-k-1)}.$$
We have
\begin{align*}
S_I / E [N_{k+1}]^2 & = O \left( n^{-I} p^{-{I \choose 2}} \right).\\
\end{align*}
Clearly 
\begin{align*}
 n^I p^{I \choose 2} & = \left( n p^{(I-1) / 2} \right)^I  \to \infty\\
\end{align*}
as $n \to \infty$, since $I \le k$ and $p = \omega( n^{-1 / (k+1) } )$.
Hence $$S_I = o \left( E [N_{k+1}]^2 \right)$$
for $2 \le I \le k$.

\end{proof}

\subsection{Finite expectation} 

By computing the factorial moments of $N_{k+1}$, the following limit theorem can be proved. (See for example Section 6.1 of \cite{randomgraphs}.)
\begin{theorem} \label{thm:midp}
If
$$ p = \left( \frac{ \left( \frac{k}{2} + 1 \right) \log{n} + \left( \frac{k}{2} \right) \log\log{n} + c}{n} \right)^{1 / (k+1)},$$ where $c \in \R$ is constant, then the number  $N_{k+1}$ of maximal $(k+1)$-cliques approaches a Poisson distribution $$N_{k+1} \to \Pois ( \mu)$$ with mean
$$\mu =  \frac{ (k/2+1)^{k/2}}{(k+1)!} e^{-c}.$$  
\end{theorem}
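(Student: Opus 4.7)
The plan is to prove the Poisson convergence via the method of factorial moments: it suffices to show that for every fixed integer $r \ge 1$,
\[
\expect\bigl[(N_{k+1})_r\bigr] \;=\; \expect\bigl[N_{k+1}(N_{k+1}-1)\cdots(N_{k+1}-r+1)\bigr] \;\to\; \mu^r,
\]
since this is the classical moment criterion for convergence to $\Pois(\mu)$.

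First I would expand the $r$th factorial moment as a sum over ordered $r$-tuples of distinct $(k+1)$-subsets:
\[
\expect\bigl[(N_{k+1})_r\bigr] \;=\; \sum_{(i_1,\dots,i_r)} \prob\bigl[A_{i_1} \cap \cdots \cap A_{i_r}\bigr],
\]
where the $i_j \in \binom{[n]}{k+1}$ are pairwise distinct. The contribution splits according to the intersection pattern of the tuple. For the \emph{pairwise disjoint} tuples the number of ordered choices is $(1+o(1))\, n^{r(k+1)}/((k+1)!)^r$ (after dividing out). For such a tuple the spanning events are determined by disjoint edge sets, and the ``no common neighbor outside'' events occur on disjoint vertex sets (up to negligible overlap of size $O(p^k)$, handled exactly as in Case I of the proof of Lemma \ref{lem:smallp}), so
\[
\prob\bigl[A_{i_1} \cap \cdots \cap A_{i_r}\bigr] \;=\; \bigl(1+o(1)\bigr)\Bigl(p^{\binom{k+1}{2}} (1-p^{k+1})^{n-k-1}\Bigr)^r.
\]
Plugging in the threshold value of $p$ and using the computation already carried out leading to Equation (\ref{eq:ex}) on each factor, the disjoint contribution tends to $\mu^r$.

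Next I would show that all tuples with some nontrivial pairwise intersection contribute $o(1)$ in total. For a fixed intersection pattern encoded by a multigraph on the $r$ blocks, the number of ordered $r$-tuples realizing it is $O(n^{r(k+1)-s})$ where $s \ge 1$ counts the ``vertex savings.'' The joint probability gains at most $p^{-\binom{I}{2}}$ per pair with intersection $I$, exactly as in Cases II and III of Lemma \ref{lem:smallp}. The same ratio estimates used there show that the contribution of each such pattern is $o(1)$ relative to $\mu^r$: the dominant savings factor is $n^{-I} p^{-\binom{I}{2}} = o(1)$ when $p$ is above the threshold of Lemma \ref{lem:smallp}, and at the logarithmic threshold of the theorem $p = \Theta\bigl((\log n/n)^{1/(k+1)}\bigr)$ the same bound is even stronger. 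Since there are only finitely many intersection patterns for each fixed $r$, summing gives that the non-disjoint contribution is $o(1)$.

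Combining the two steps, $\expect[(N_{k+1})_r] \to \mu^r$ for every fixed $r$, and the method of moments yields $N_{k+1} \to \Pois(\mu)$ in distribution. The main technical obstacle is the bookkeeping for general intersection patterns: for $r \ge 3$, one must track how three or more $(k+1)$-sets can share vertices simultaneously, and verify that the ``no common neighbor outside'' events still behave essentially independently. However, all the cancellations reduce to the same edge-counting and neighbor-counting arguments already used in the variance computation of Lemma \ref{lem:smallp}, applied pair-by-pair via inclusion--exclusion, so no fundamentally new estimate is required beyond what is already in the paper.
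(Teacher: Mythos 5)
The paper deliberately states Theorem~\ref{thm:midp} \emph{without} proof, remarking only that it ``can be proved by computing the factorial moments of $N_{k+1}$'' and pointing to Section~6.1 of a standard random graphs reference. Your proposal carries out exactly that method---decomposing $\expect[(N_{k+1})_r]$ into pairwise-disjoint tuples, which contribute $(1+o(1))\mu^r$, and overlapping tuples, which are controlled by the same $n^{-I}p^{-\binom{I}{2}}=o(1)$ estimates already appearing in Cases~I--III of the proof of Lemma~\ref{lem:smallp}---so it is correct and is precisely the argument the paper alludes to.
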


Since we do not use Theorem \ref{thm:midp} for anything else, we state it without proof.  We record the combinatorial observation for the sake of completeness, however, and also to provide some justification for a conjecture in Section \ref{sec:comments}.

\section{Vanishing cohomology and property~(T)} \label{sec:pfmain}

In this section we prove a slightly sharper version of part (1) of Theorem \ref{thm:main} and Theorem \ref{thm:propT}.
Set 
$$\bar{p} = \left( \frac{ \left( \frac{k}{2} + 1 \right) \log{n} + C_k \sqrt{\log n}\log\log{n} }{n} \right)^{1 / (k+1)},$$
where $C_k$ is a constant depending only on $k$, to be chosen later,
and we assume that $p \ge \bar{p}$.

\bigskip

For a finite graph $H$, let $C^0(H)$ denote the vector space of $0$-forms on $H$, i.e.\ the vector space of functions $f \colon V(H) \to \R$.
If all the vertex degrees are positive then the averaging operator $A$ on $C^0(H)$ is defined by
$$Af(x) = \frac{1}{\deg x} \sum_{y \sim x} f(y),$$
where the notation $y \sim x$ means that the sum is over all vertices $y$ which are adjacent to vertex $x$.
The identity operator on $C^0(H)$ is denoted by $I$.
Then the {\it normalized graph Laplacian} $\mathcal{L} = \mathcal{L} (H)$ is a linear operator on $C^0(H)$ defined by 
$\mathcal{L} = I - A$.

The eigenvalues of $\mathcal{L}$ satisfy $0 = \lambda_1 \le \lambda_2 \le \dots \le \lambda_N \le 2$, where $N = |V(G)|$ is the number of vertices of $H$.  Moreover, the multiplicity of the zero eigenvalue equals the number of connected components of $H$.  In the case that $H$ is connected then the smallest positive eigenvalue $ \lambda_2 [ H ]$, is sometimes called the {\it spectral gap} of $H$.

A simplicial complex $\Delta$ is said to be {\it pure $D$-dimensional} if every face of $\Delta$ is contained in a $D$-dimensional face.  A special case of Theorem 2.1 in \cite{Ballmann} is the following.

\begin{cvt} [Garland, Ballman--\'Swi\k{a}tkowski] \label{thm:BS}  Let $\Delta$ be a pure $D$-dimensional finite simplicial complex such that for every $(D-2)$-dimensional face $\sigma$, the link $\lk_{\Delta}(\sigma)$ is connected and has spectral gap $$\lambda_2[ \lk_{\Delta}(\sigma)] > 1 - \frac{1}{D}.$$  Then $H^{D-1}(\Delta,\Q) = 0$.
\end{cvt}

The cohomology group  $H^{D-1}(\Delta, \Q)$ only depends on the $D$-skeleton of $\Delta$.  
So to use Theorem \ref{thm:BS} to show that $H^k(X, \Q) = 0$ we will show that if the edge probability $p$ is large enough, with high probability
\begin{enumerate}
\item the $(k+1)$-skeleton of $X \sim X(n,p)$ is pure-dimensional, and
\item for every $(k-1)$-dimensional face $\sigma \in X$, the link $\lk_{X}( \sigma)$ is connected and has spectral gap $$\lambda_2[\lk_{X}( \sigma) ]  > 1- \frac{1}{k}.$$
\end{enumerate}

%
%

\subsection{Pure-dimensional}

We first establish that for large enough $p$ the $(k+1)$-skeleton of $X$ is pure-dimensional. 

\begin{lemma} \label{lem:pure}
If $p \ge \bar{p}$ then w.h.p.\ the $(k+1)$-skeleton of $X \sim X(n,p)$ is pure $(k+1)$-dimensional; in other words, every face is contained in the boundary of a $(k+1)$-face. 
\end{lemma}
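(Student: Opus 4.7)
The plan is to reduce purity of the $(k+1)$-skeleton of $X$ to a statement about maximal cliques in the underlying graph $G \sim G(n,p)$, handle the top clique size using Lemma \ref{lem:bigp}, and dispatch the smaller sizes by a routine first-moment estimate.

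First I would observe that the $(k+1)$-skeleton of $X$ is pure $(k+1)$-dimensional if and only if, for every $j \in \{1,2,\ldots,k+1\}$, each $j$-clique of $G$ is contained in some $(k+2)$-clique. Chaining one size at a time---a $j$-clique sits inside a $(j+1)$-clique, which sits inside a $(j+2)$-clique, and so on up to a $(k+2)$-clique---it therefore suffices to prove that for each fixed $j \in \{1,\ldots,k+1\}$, w.h.p.\ the number $M_j$ of maximal $j$-cliques of $G$ is zero. A union bound over the finitely many such $j$ then delivers the lemma.

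For $j = k+1$ this is exactly Lemma \ref{lem:bigp}: the correction $C_k\sqrt{\log n}\log\log n$ appearing in $\bar{p}$ dominates any $\omega(1)$, so that lemma's hypothesis is satisfied and $M_{k+1} = N_{k+1} = 0$ w.h.p. For each fixed $j \le k$, the first-moment bound
\[
E[M_j] = \binom{n}{j} p^{\binom{j}{2}} (1 - p^j)^{n-j} \le n^j e^{-p^j(n-j)}
\]
suffices: since $p \ge \bar{p}$ we have $p^j n \ge c_k \, n^{\,1 - j/(k+1)} (\log n)^{j/(k+1)}$ for some $c_k > 0$, a quantity that grows polynomially in $n$ because $j \le k$ forces $1 - j/(k+1) > 0$. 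Hence $E[M_j]$ is superpolynomially small, Markov's inequality gives $M_j = 0$ w.h.p., and a union bound over $j \in \{1,\ldots,k\}$ finishes the argument. The only substantive ingredient is the $j = k+1$ case, which is precisely the sharp-threshold content of Lemma \ref{lem:bigp} and is where the constant $k/2 + 1$ enters; the smaller-$j$ cases are easy because $\bar{p}$ is very far above each of their individual vanishing thresholds.
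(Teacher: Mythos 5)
Your proposal is correct and takes essentially the same route as the paper: the top-dimensional case ($j = k+1$, i.e., maximal $(k+1)$-cliques) is handled by Lemma \ref{lem:bigp} after noting that the $C_k\sqrt{\log n}\log\log n$ correction in $\bar{p}$ dominates any $\omega(1)$, and the lower-dimensional cases $j \le k$ are dispatched by first moment plus Markov. The paper compresses the lower-dimensional cases into the single word ``identical''; your explicit computation $E[M_j] \le n^j e^{-p^j(n-j)}$ with $p^j n \gtrsim n^{1-j/(k+1)}$ makes precise that these cases are in fact much easier (superpolynomial decay rather than merely tending to zero), which is a helpful clarification but not a different proof.
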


\begin{proof}
A $k$-face not contained in a $(k+1)$-face would correspond to a maximal $(k+1)$-clique.  But 
$$\bar{p} \ge \left( \frac{ \left( \frac{k}{2} + 1 \right) \log{n} + \left( \frac{k}{2} \right) \log\log{n} + \omega(1)}{n} \right)^{1 / (k+1)},$$
so by Lemma \ref{lem:bigp}, 
for $p \ge \bar{p}$ the probability that there exist any maximal $(k+1)$-cliques tends to zero as $n \to \infty$. The argument that for $0 \le i < k$ , w.h.p.\ every $i$-dimensional face is contained in an $(i+1)$-dimensional face is identical.
\end{proof}

\subsection{Connectedness and spectral gap} \label{sec:gap}

Finally, we must show that if $p$ is large enough, then w.h.p.\ the link of every $(k-1)$-dimensional face in the $(k+1)$-skeleton of $X$ is connected and has sufficiently large spectral gap.  We require the following spectral gap theorem, which is Theorem 2.1 in \cite{hkp}.

\begin{sgt}
\label{thm:ergap} [Hoffman--Kahle--Paquette]
Let $G \sim G(n,p)$ be an Erd\H{o}s--R\'enyi random graph. Let $\mathcal{L}$ denote the normalized Laplacian of $G$, and let $\lambda_1 \leq \lambda_2 \leq \cdots \leq \lambda_n$ be the eigenvalues of $\mathcal{L}.$  For every fixed $\alpha \geq 0$, there is a constant $C_{\alpha}$ depending only on $\alpha$, such that if
$$p \ge  \frac{(\alpha+1)\log n + C_{\alpha} \sqrt{\log n} \log\log n}{n},$$
then $G$ is connected and 
$$\lambda_2 (G) > 1- o(1),$$
with probability $1 - o(n^{-\alpha})$.
\end{sgt}

\medskip
Let $\Ns$ denote the number of vertices in the link of a $(k-1)$-dimensional face $\sigma$ in $X \sim X(n,p)$.
Most of the work in this section is in establishing the following estimate, which will allow us to apply the spectral gap theorem.

\begin{lemma} \label{lem:pbig}
If $$ p \ge \left( \frac{(k/2+1) \log n + C_k \sqrt{ \log n} \log \log n}{n} \right)^{1/(k+1)}$$
then w.h.p.\ 
$$ \frac{(\alpha +1) \log \Ns +C_{\alpha} \sqrt{ \log \Ns} \log \log \Ns }{\Ns} \le 1 / p$$
for every $(k-1)$-dimensional face $\sigma \in X$, where $\alpha = k(k+3)/2$ and $C_{\alpha}$ is as defined in the spectral gap theorem, and $C_k $ is a constant which only depends on $k$.
\end{lemma}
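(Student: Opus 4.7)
The approach is a concentration-plus-union-bound argument for $N_\sigma$. Conditionally on $\sigma = \{v_1,\dots,v_k\}$ spanning a $k$-clique in $G$ (equivalently, $\sigma$ being a $(k-1)$-face of $X$), a vertex $u \notin \sigma$ lies in the link iff it is adjacent to every $v_i$, an event of probability $p^k$ depending only on edges independent of the conditioning. Hence $N_\sigma \sim \mathrm{Bin}(n-k, p^k)$, and the hypothesis $p \ge \bar{p}$ yields
$$E[N_\sigma] = (n-k)p^k \ge (1-o(1))\,\frac{(k/2+1)\log n + C_k\sqrt{\log n}\log\log n}{p}.$$

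Because this mean is $\omega(\log n)$, a multiplicative Chernoff bound with $\delta = o(1)$ chosen so that $\delta^2 E[N_\sigma] = \omega(\log n)$ gives
$$\Pr\bigl[\,N_\sigma \le (1-\delta) E[N_\sigma] \bigm| \sigma \text{ is a } k\text{-clique}\,\bigr] = o(n^{-k-1}),$$
and a union bound over the at most $\binom{n}{k} \le n^k$ candidate $k$-subsets shows that $N_\sigma \ge (1-o(1)) E[N_\sigma]$ for every $(k-1)$-face $\sigma$ of $X$ simultaneously, w.h.p. The matching upper-tail estimate gives $\log N_\sigma = (1+o(1))\log n/(k+1)$.

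Once $N_\sigma$ is pinned down in this way, the target inequality reduces at leading order to a comparison between $(\alpha+1)\log n/(k+1)$ and $(k/2+1)\log n$; the choice $\alpha = k(k+3)/2$ is precisely what makes the identity $\alpha+1 = (k+1)(k/2+1)$ hold with equality, which is where this specific constant comes from. The sub-leading $\sqrt{\log n}\log\log n$ correction built into $\bar{p}$ is then engineered to dominate the corresponding $C_\alpha\sqrt{\log N_\sigma}\log\log N_\sigma$ correction in the spectral gap threshold, provided $C_k$ is taken sufficiently large as a function of $C_\alpha$.

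The main obstacle is the sharp matching of leading constants rather than any probabilistic subtlety. The spectral gap theorem is being applied essentially at the boundary of its range, with no slack in the principal logarithmic term, which both forces the exact exponent $\alpha = k(k+3)/2$ and explains why the hypothesis on $p$ must carry the sub-leading $C_k\sqrt{\log n}\log\log n$ correction. The concentration step is by contrast routine, since $E[N_\sigma]$ is polynomially large in $1/p$ and therefore comfortably above the logarithmic scale needed to absorb the $n^k$-fold union bound.
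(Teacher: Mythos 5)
Your proposal matches the paper's proof in all its essential ingredients: both identify $N_\sigma \sim \mathrm{Bin}(n-k,p^k)$, concentrate $N_\sigma$ around $np^k$ via Chernoff plus a union bound over the $O(n^k)$ candidate faces, and then verify the target inequality by exploiting the exact algebraic cancellation $\alpha+1 = (k+1)(k/2+1)$ in the leading term and the slack $C_k > C_\alpha$ in the $\sqrt{\log n}\log\log n$ correction. One small imprecision to flag: the claim $\log N_\sigma = (1+o(1))\log n/(k+1)$ holds only when $p$ is within a subpolynomial factor of $\bar{p}$; for $p$ polynomially larger, $\log N_\sigma$ is bounded away from $\log n/(k+1)$, but there the inequality has a polynomial amount of slack (the paper handles this as a separate easy case, Case II of its proof, after first establishing via a derivative calculation that the margin in its Case I is increasing in $p$), so the gap is cosmetic rather than substantive and is closed by noting that $np^{k+1} - (\alpha+1)\log(np^k) - \cdots$ is increasing in $np^{k+1}$ once $np^{k+1}\gg 1$.
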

%


\begin{proof}[Proof of Lemma \ref{lem:pbig}]

Let $f_{k-1}$ denote the number of $(k-1)$-dimensional faces. Then $f_{k-1}$ has the same distribution as the binomial random variable $\mbox{Bin}({n \choose k},p^{k \choose 2})$ and
$\Ns$ has the same distribution as the binomial random variable $\mbox{Bin}(n-k,p^k)$. So for
$p \ge n^{-1/(k+1)}$, Chernoff bounds give that with high probability,
$$ \mu - \mu^{3/5} \le  \Ns \le \mu + \mu^{3/5}$$ for every $(k-1)$-dimensional face $\sigma$, where $$\mu  = n p^k.$$
Let $N =  \mu - \mu^{3/5}$.

Now let 
$$g(x) = \frac{ (\alpha +1 ) \log x + C_{\alpha} \sqrt{ \log x} \log \log x}{x}.$$
Then since sums and products of increasing positive functions are increasing, a little calculus shows that $g(x)$ is decreasing on the interval $x \in (16, \infty)$.

So it suffices to show that w.h.p.\
\begin{equation}
 p \ge \frac{ (\alpha +1 ) \log N + C_{\alpha} \sqrt{ \log N} \log \log N}{N},
\label{eqn:hypo1}
\end{equation}
since w.h.p.\ $\Ns  \ge N$ for every $\sigma$.

Write $$f(p) = Np - (\alpha+1) \log N - C_{\alpha} \sqrt{ \log N} \log \log N$$ 
and $$\bar{p} = \left( \frac{(k/2+1) \log n + C_k \sqrt{ \log n} \log \log n}{n} \right)^{1/(k+1)}.$$
The goal is to show that $f(p) > 0 $ for $p \ge \bar{p}$.\\

\noindent {\it Case I:
$$ (k/2+1) \log n + C_k \sqrt{ \log n} \log \log n \le np^{k+1} \le n^{1/10k}$$ }

A reasonable approximation of $f$ is given by the auxillary function
$$\tilde{f}(p) = \mu p - (\alpha+1) \log \mu - C_{\alpha} \sqrt{\log n} \log \log n - 1.$$

In particular we will show that for large $n$, and in the given range of $p$, \begin{inparaenum} \item $f(p) \ge \tilde{f}(p)$, \item $\tilde{f}(\bar{p}) > 0$, and \item $d \tilde{f}/dp > 0$, \end{inparaenum} which together establish the claim that $f(p) > 0$ for $p$ in the given range.\\

\begin{enumerate}
\item Clearly
\begin{eqnarray*}
-C_{\alpha} \sqrt{\log N} \log \log {N} \ge  -C_{\alpha} \sqrt{\log n} \log \log {n},
\end{eqnarray*}
since $n \ge N$.

Since $N \le \mu$ and $\alpha > 0$, we also have $$
-(\alpha + 1) \log N  \ge - (\alpha+ 1) \log \mu.$$

Finally, we are assuming that $np^{k+1} \le n^{1/10k}$ and $1/10k < 2 / (3k+5)$ since $k \ge 1$, so we have
\begin{align*}
np^{k+1} & \le n^{2/ (3k+5)}\\
p^{k+1} & \le n ^{- (3k+3)/(3k+5)}\\
p^{(3k+5)/5 }& \le n ^{- 3/5}\\
p(np^k)^{3/5} & \le 1,
\end{align*}
or in other words
$$Np \ge \mu p  - 1.$$

Adding the three inequalities yields $f(p) \ge \tilde{f}(p)$.\\

\item Since
$$\bar{p}  = \left( \frac{(k/2+1) \log n + C_k \sqrt{ \log n} \log \log n}{n} \right)^{1/(k+1)},$$
we have
$$\log{\bar{p}} = \frac{1}{k+1} \left( \log \log n - \log n  \right) + O(1).$$
Recalling that $$\alpha + 1 = \frac{(k+1)(k+2)}{2},$$
we have that
\begin{align*}
\tilde{f}(\bar{p}) 
& = n \bar{p}^{k+1} - (\alpha+1)( \log n  + k \log \bar{p}) - C_{\alpha} \sqrt{\log n} \log \log n - 1\\
& =  (k/2+1) \log n + C_k  \sqrt{ \log n} \log \log n - (\alpha+1)(\log n + k \log{ \bar{p}}) \\
&  \indent - C_{\alpha}  \sqrt{ \log n} \log \log n - 1\\
& =  (k/2+1) \log n + (C_k - C_{\alpha} ) \sqrt{ \log n} \log \log n \\
& \indent - \frac{(k+1)(k+2)}{2} \left(\frac{1}{k+1}\log n + \frac{k}{k+1}\log \log n \right) - O(1)\\
& = (C_k - C_{\alpha} ) \sqrt{ \log n} \log \log n  - \left( \frac{k+2}{k} \right) \log \log n - O(1),
\end{align*}
so as long as $C_k > C_{\alpha}$, we have that $\tilde{f}(\bar{p}) > 0 $ for large enough $n$.\\

\item Since
$$\tilde{f}(p) = np^{k+1} - (\alpha+1)k \log p  - C_{\alpha} \sqrt{\log n} \log \log n - 1,$$
we have
$$d \tilde{f}/dp = (k+1)n p^k - (\alpha+1)k p^{-1}.$$
Then $ d \tilde{f}/dp =0 $ only at $$p_c = \left( \frac{k (\alpha + 1)}{k+1} n \right)^{1/(k+1)}.$$
and $\alpha$ and $k$ are constant so $p_c < \bar{p}$ for large enough $n$.

Since $d \tilde{f}/dp$ is continuous on $(0, \infty)$ and $$\lim_{p \to \infty} \tilde{f}(p) = \infty,$$ we have $d \tilde{f}/dp > 0$ for $p \ge \bar{p}$.\\
\end{enumerate}

\bigskip

\noindent {\it Case II:
$$  n^{1/100k}\le np^{k+1}$$ }

In this case proving that $f(p) > 0$ is more straightforward.  Indeed,
\begin{align*}
f(p) & = Np - (\alpha +1 ) \log N - C_{\alpha} \sqrt{\log N } \log \log N\\
 & \ge ( 1- o(1))  \mu p  - (\alpha +1 ) \log n - C_{\alpha}  \sqrt{\log n } \log \log n\\
 & = ( 1- o(1)) n p^{k+1}  - (\alpha +1 ) \log n - C_{\alpha}  \sqrt{\log n } \log \log n\\
 & = n^{1/100k} - O( \log n),
 \end{align*}
 so for large enough $n$, we have $f(p) > 0$.
 \bigskip
 
Together, Cases I and II establish that $f(p) > 0$ for $p > \bar{p}$.

\end{proof}

Now we are in position to prove one implication of the main result.

\begin{proof}[Proof of part (1) of Theorem \ref{thm:main}] 

Suppose $p \ge \bar{p}$ and $X \sim X(n,p)$, and let $f_{k-1}$ denote the number of $(k-1)$-dimensional faces of $X$. Then Chernoff bounds show that w.h.p.\ 
$$f_{k-1} \le (1 + o(1) ) { n \choose k } p^{k \choose 2}.$$

Lemma \ref{lem:pbig} gives that w.h.p.
$$ p \ge  \frac{(\alpha +1) \log \Ns +C_{\alpha} \sqrt{ \log \Ns} \log \log \Ns }{\Ns}$$
for every $(k-1)$-face $\sigma$, where $\alpha = k(k+3)/2$.

The link of a $(k-1)$-face $\sigma$ in the $(k+1)$-skeleton has the same distribution as an Erd\H{o}s--R\'enyi random graph $G(N_{\sigma}, p)$, so the spectral gap theorem gives that the probability $P_{\sigma}$ that $\lambda_2 [ G(N_{\sigma}, p) ] < 1 - 1/(k+1)$ is $o(N_{\sigma}^{-\alpha})$.

Let $P_f$ be the probability that there exists a face $\sigma$ such that $$\lambda_2 [ G(N_{\sigma}, p) ] < 1-  \frac{1}{k+1}.$$
Applying a union bound, 
\begin{align*}
P_f & \le \sum_{\sigma} P_{\sigma}\\ 
& \le \sum_{\sigma} o \left( N_{\sigma}^{-\alpha} \right)\\
& \le \sum_{\sigma} o \left( \mu^{-\alpha} \right)\\
& \le (1 + o(1) ) { n \choose k } p^{k \choose 2} o(\mu^{-\alpha})\\
& =o \left(   n^k p^{k \choose 2} (np^k)^{-k(k+3)/2} \right)\\
& = o \left( \left( np^{k+1} \right)^{-k(k+1)/2} \right)\\
& = o(1),
\end{align*}
since $  np^{k+1} \to \infty$ for $p \ge \bar{p}$.

Now we have w.h.p.\ the spectral gap of the link of every $(k-1)$-face $\sigma$ in the $(k+1)$-skeleton is greater than $1/(k+1)$, so the cohomology vanishing theorem gives that $H^k(X, \Q)=0$ as desired.
\end{proof}

\begin{proof}[Proof of  part (1) of Theorem \ref{thm:propT}] 
The proof is the same as the case $k=1$ of part (1) of Theorem \ref{thm:main}, but instead of the cohomology vanishing theorem we use the following closely related theorem of \.Zuk \cite{z03}.

\begin{Zuk} \label{thm:tool}
If $X$ is a pure $2$-dimensional locally-finite simplicial complex such that for every vertex $v$, the vertex link $\lk(v)$ is connected and the normalized Laplacian $L= L [\lk(v) ]$ satisfies $\lambda_2(L) > 1/2$, then $\pi_1(X)$ has property~(T).
\end{Zuk}

Both the cohomology vanishing theorem for $k=1$ and the spectral criterion for property~(T) require that the link of every vertex in the $2$-skeleton of $X$ has spectral gap at least $1/2$, and this is exactly what was checked in the proof of part (1) of Theorem \ref{thm:main} above.
\end{proof}

\section{Non-vanishing cohomology} \label{sec:pfmain2}

In this section we prove Part (2) of Theorems \ref{thm:main} and \ref{thm:propT}.  In particular we show that if $X \sim X(n,p)$ where
$$ \left( \frac{k+1 + \epsilon}{n} \right) ^{1/k} \le p \le  \left( \frac{\left(\frac{k}{2} + 1-\epsilon \right)\log{n}}{n}\right)^{1/(k+1)},$$ then w.h.p.\ $H^k(X, \Q)  \neq 0.$
The strategy is to show that in this regime there exist $k$-faces not contained in the boundary of any $(k+1)$-face and which generate nontrivial cohomology classes. This is the higher-dimensional analogue of isolated vertices being the main obstruction to connectivity of the random graph $G(n,p)$; see for example Chapter 7 of \cite{Bollo}. 

First we show that if $p$ is in the given regime, then w.h.p.\ there exist $k$-dimensional faces $\sigma \in X$ which are not contained in the boundaries of any $(k+1)$-dimensional faces --- such faces generate cocycles in $H^k$ (i.e.\ the characteristic function of $\sigma$ is a cocycle).   Then we show that if $p$ is sufficiently large, no $k$-dimensional face can be a coboundary.  Putting it all together, we find an interval of $p$ for which there is at least one $k$-dimensional face that represents a nontrivial class in $H^k(X, \Q)$.
%
%
\subsection{Nontrivial cocycles}

Lemma \ref{lem:smallp} gives that for $p$ in this regime, w.h.p.\ there exist maximal $(k+1)$-cliques in $G \sim G(n,p)$.  But these are equivalent to isolated $k$-faces $\sigma$ in $X \sim X(n,p)$, and the characteristic functions of such $\sigma$ are cocycles.
The main point is to show that $\sigma$ nontrivial, i.e. that $\sigma$ is not the coboundary of anything.

We have showed above that there exist $k$-dimensional faces which are not contained in the boundary of any $(k+1)$-dimensional face.  Any such face generates a class in the vector space $Z^k(X)$ of $k$-cocycles.  Now we will show that in the same regime of $p$, w.h.p.\ no $k$-dimensional face represents a $k$-coboundary.  Hence $H^k(X, \Q) \neq 0$.

Consider the exact sequence of the pair $(X, X - \sigma)$ where $\sigma$ is a maximal $k$-face:
$$H^{k-1}(X-\sigma) \rightarrow H^k(X, X-\sigma) \rightarrow H^k(X)$$
By excision, $H^k(X, X- \sigma) \cong H^k( \sigma, \partial  \sigma) \cong \Q$.

Suppose that a $k$-dimensional face $\sigma \in X$ represents a $k$-coboundary, i.e.\ $\sigma = d \phi$ for some $(k-1)$-cochain $\phi$.  Then  $\phi$ represents a nontrivial class in $H^{k-1}(X- \sigma)$. (The notation $X - \sigma$ means $X$ with the open face $\sigma$ deleted.)  The following lemma shows that it is unlikely that such a $\sigma$ exists.

\begin{lemma} \label{lem:del} Fix $k \ge 1$ and $0 < \epsilon \le 1/k$, and let $X \sim X(n,p)$.  If $$p \ge \frac{1}{n^{1 / k - \epsilon}},$$ then w.h.p.\ $H^{k-1} ( X - \sigma, \Q) = 0$ for every maximal $k$-face $\sigma$.
\end{lemma}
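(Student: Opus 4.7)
The plan is to apply the cohomology vanishing theorem of Garland and Ballmann--\'Swi\k{a}tkowski to $X-\sigma$ at dimension $D=k$, mirroring the argument of Section~\ref{sec:pfmain} but shifted down by one dimension. Under the hypothesis $p \ge n^{-1/k+\epsilon}$ we have $np^k \ge n^{k\epsilon} \to \infty$ polynomially, which gives substantially more slack than was needed at dimension $k+1$ in Section~\ref{sec:pfmain}, so all spectral estimates will have room to spare.

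First I would verify that $X-\sigma$ is pure $k$-dimensional. The only faces at risk are the $(k-1)$-subfaces $\sigma_i = \sigma \setminus \{v_i\}$ of $\sigma$ itself, since they lose their unique containing $k$-face $\sigma$. Conditional on $\sigma$ being maximal, each $\sigma_i$ has in expectation approximately $np^k(1-p) = n^{k\epsilon + o(1)}$ alternative filling vertices $w \ne v_i$ adjacent to every vertex of $\sigma_i$, so the probability that none exists for some $\sigma_i$ is super-polynomially small and survives the union bound over all maximal $\sigma$. Pureness for the remaining faces of dimension $\le k-1$ follows from the argument of Lemma~\ref{lem:pure} applied one dimension lower.

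Second I would check that every $(k-2)$-face $\tau$ of $X-\sigma$ has link with spectral gap greater than $1 - 1/k$. If $V(\tau) \not\subset V(\sigma)$, then $\lk_{X-\sigma}(\tau) = \lk_X(\tau)$, whose $1$-skeleton is Erd\H{o}s--R\'enyi on $N_\tau \approx np^{k-1} \gg \log n$ vertices with edge probability $p$; the spectral gap theorem, applied exactly as in Lemma~\ref{lem:pbig}, delivers $\lambda_2 > 1 - 1/k$ with failure probability $o(N_\tau^{-\alpha})$ for arbitrarily large $\alpha$. If $V(\tau) \subset V(\sigma)$, then the $1$-skeleton of $\lk_{X-\sigma}(\tau)$ is that of $\lk_X(\tau)$ with exactly one edge deleted---namely the edge joining the two vertices of $V(\sigma) \setminus V(\tau)$, which corresponds to the missing face $\sigma$---and the Wielandt--Hoffman theorem shows that this shifts each eigenvalue of the normalized Laplacian by $O(1/N_\tau) = o(1)$, preserving the bound. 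The cohomology vanishing theorem at $D = k$ then yields $H^{k-1}(X-\sigma, \Q) = 0$.

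A union bound over the $\le \binom{n}{k+1}$ candidate maximal $k$-faces $\sigma$ and $\le \binom{n}{k-1}$ codimension-two faces $\tau$ finishes the argument, provided $\alpha$ is chosen large enough---this is essentially the same bookkeeping performed at the end of the proof of part~(1) of Theorem~\ref{thm:main}. The main obstacle is controlling these tail probabilities simultaneously while handling the slight conditional bias in edge distributions caused by maximality of $\sigma$; however, since maximality only reconditions the $(k+1)(n-k-1)$ edges incident to $V(\sigma)$, its effect on links whose vertex sets lie far from $V(\sigma)$ is negligible, and for the few links meeting $V(\sigma)$ the one-edge perturbation argument already absorbs the correction.
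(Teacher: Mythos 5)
Your proposal follows essentially the same route as the paper's proof: restrict to the $k$-skeleton, apply the Garland/Ballmann--\'Swi\k{a}tkowski cohomology vanishing theorem to $X - \sigma$ at dimension $D = k$, observe that each link $\lk_{X-\sigma}(\tau)$ is either $\lk_X(\tau)$ or $\lk_X(\tau)$ with a single edge removed (the latter exactly when $V(\tau)\subset V(\sigma)$), and invoke the Wielandt--Hoffman theorem to show that deleting one edge perturbs the spectral gap by $o(1)$. Two remarks. First, your explicit verification that the $k$-skeleton of $X - \sigma$ is pure $k$-dimensional fills a hypothesis of the cohomology vanishing theorem that the paper's write-up does not check; your observation that each $(k-1)$-face $\sigma_i \subsetneq \sigma$ has, conditional on maximality of $\sigma$, $\approx np^k(1-p) = n^{k\epsilon + o(1)}$ alternative filling vertices, with failure probability small enough to survive a union bound, correctly closes that small gap. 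Second, in the Wielandt--Hoffman step the relevant scale is the minimum vertex degree in $\lk_X(\tau)$, which is $\approx np^k \ge n^{k\epsilon}$, not the number of vertices $N_\tau \approx np^{k-1}$; the Frobenius-norm perturbation from removing one edge is $O\left( \frac{1}{np^k} \right)$, which is larger than your stated $O(1/N_\tau)$ since $np^k < np^{k-1}$. Both quantities are $o(1)$ under $p \ge n^{-1/k+\epsilon}$, so the conclusion is unaffected, but the paper is careful to track the degree rather than the vertex count.
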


\begin{proof}[Proof of Lemma \ref{lem:del}]
The claim that $H^{k-1} ( X, \Q) = 0$ is implied by Part (1) of Theorem \ref{thm:main} (with the index shifted by $1$), proved in Section \ref{sec:pfmain}, so our focus is on the second part of the claim, that $H^{k-1} ( X-\sigma, \Q) = 0$ for every $k$-face $\sigma$.

We apply the spectral gap theorem again.  Since the proof here is so similar to that of Section \ref{sec:pfmain}, we omit some details and focus on what is new in this argument.

We may restrict our attention to the $k$-skeleton of $X$.   Let $\sigma$ be an arbitrary $k$-dimensional face of $X$. Let $\tau$ be a $(k-2)$-dimensional face of $X - \sigma$, and denote the link of $\tau$ in $X-\sigma$ by $\lk_{ X - \sigma}(\tau)$.  Since we are restricting our attention to the $k$-skeleton of $X$, this link is a graph.  Clearly, either $\lk_{ X - \sigma}(\tau)=\lk_{X}(\tau)$ or $\lk_{ X - \sigma}(\tau)= \lk_{X}(\tau) - e$ for some edge $e$ in the graph $\lk_{X}(\tau)$.  

We have control on the spectral gap of $\lk_{X}(\tau)$ by the spectral gap theorem.  From this we can control the spectral gap of $\lk_{X-\sigma}(\tau)$ via the Wielandt--Hoffman theorem \cite{HW53}.

\begin{wht}  Let $A$ and $B$ be normal matrices.  Let their eigenvalues $a_i$ and $b_i$ be ordered such that $\sum_i |a_i - b_i|^2$ is minimized.  Then we have $$ \sum_i |a_i - b_i|^2 \le \|A-B\| ,$$
where $\| \cdot   \|$ denotes the Frobenius matrix norm.
\end{wht}

Consider the normalized Laplacians $A=\mathcal{L}[ \lk_{X}(\tau)]$ and $B= \mathcal{L} [\lk_{X - \sigma}(\tau)]$ --- since these matrices are symmetric, they are normal.  All eigenvalues of $A$ and $B$ are real, and putting them in increasing order minimizes the sum $\sum_i |a_i - b_i|^2$ .

We have $$ \| A- B \| = \sqrt{ \sum_i \sum_j  |a_{ij} - b_{ij}|^2}.$$
In a normalized graph Laplacian, $$a_{ij} = \frac{1}{\sqrt{ \deg(v_i) \deg(v_j)} }$$
if $v_i $ is adjacent to $v_j$, and $a_{ij} = 0$ otherwise.

The link of a $(k-2)$-face (in the $k$-skeleton) has the same distribution as a random graph on the vertices in the link, so standard results give that the degree of every vertex in $\lk_{X}(\tau)$ is exponentially concentrated around its mean $(n-k+1) p^k \ge n^{k \epsilon}$ (see Chapter 3 in \cite{Bollo}) and there are only polynomially many such vertices summed over all links. So w.h.p.\ every vertex in every link has degree $(1+o(1)) np^k \ge n^{k \epsilon}$.  Then the Wielandt--Hoffman theorem tells us that the Frobenius matrix norm of the normalized Laplacian can not shift by more than $O \left( n^{-k \epsilon} \right) = o(1)$ when an edge is deleted.  Hence no single eigenvalue can shift by more than this.

Since we already have $\lambda_2 [ \lk_{X} ( \tau)  ]> 1 -o(1) $ for every $\tau$ by Section \ref{sec:gap}, this gives that $\lambda_2  [ \lk_{X-\sigma} ( \tau)  ] > 1 -o(1) $ for every $\tau$ and $\sigma$ as well.  Applying the cohomology vanishing theorem again, we have that $H^{k-1} ( X - \sigma, \Q) = 0$ for every $k$-dimensional face $\sigma$.

\end{proof}

\begin{proof}[Proof of Part (2) of Theorems \ref{thm:main} and \ref{thm:propT}]
If 
$$\frac{1}{n^{1/k - \epsilon}} \le p \le \left( \frac{\left(\frac{k}{2} + 1-\epsilon \right)\log{n}}{n}\right)^{1/(k+1)},$$
where $0 < \epsilon < 1/ k(k+1)$, then Lemma \ref{lem:del}, together with the excision argument above, gives that w.h.p.\ $H^{k}(X, \Q) \neq 0$.

On the other hand, if 
$$  \left( \frac{k+1 + \epsilon}{n} \right)^{1/k} \le p \le \frac{1}{n^{1 / k - \epsilon} }$$
then an easier argument is available. Indeed, standard results for clique counts give that in this case w.h.p.\ $f_k > f_{k-1} + f_{k+1}$, and the Morse inequalities give that $\beta_k \ge f_k - f_{k-1} - f_{k+1}$, so we conclude that w.h.p.\ $\beta_k > 0$.   Together these two intervals cover the whole range of $p$ for Part (2) of Theorem \ref{thm:main}.

\medskip

Part (2) of Theorem \ref{thm:main} implies Part (2) of \ref{thm:propT} since (T) groups have finite abelianizations.  
\end{proof}

%
%
%
%
%
%
%
%
%

\section{Comments} \label{sec:comments}
%
%
Earlier results of Linial and Meshulam \cite{LM} on Bernoulli random $2$-complexes $Y_2(n,p)$, and more generally of Meshulam and Wallach \cite{MW} on random $d$-complexes $Y_d(n,p)$ also give sharp threshold cohomology-vanishing analogues of the Erd\H{o}s--R\'enyi theorem.  The techniques in these papers are combinatorially intricate cocycle counting arguments. One common thread in this area is the notion of expansion; see for example the discussion of random complexes as higher dimensional expanders in \cite{Dotterrer12}.

DeMarco, Hamm, and Kahn independently proved the $k=1$ case of Theorem \ref{thm:main} with $\Z / 2$-coefficients \cite{DHK12}. This is a slightly stronger result topologically speaking, since $H_1(X, \Z/2)=0$ implies $H_1(X, \Q)=0$ by the universal coefficient theorem, and it would be interesting to know if their techniques can be extended to $k \ge 2$, or to other finite fields.

\medskip
 

One might expect that part (1) of Theorem \ref{thm:main} could be slightly sharpened as follows \footnote{As this article is nearing its final revisions, it seems that an improved version of the spectral gap theorem in Section 3, in joint work with Hoffman and Paquette, will be strong enough to establish Conjecture \ref{con:Poi}.}.
%
%

\begin{conjecture} \label{con:Poi}
If
$$ p = \left( \frac{ \left( \frac{k}{2}+1 \right) \log{n} + \left( \frac{k}{2}  \right) \log\log{n} + c}{n} \right)^{1 / (k+1)},$$ where $k \ge 1$ and $c \in \R$ are constant, then the $k$th Betti number $\beta^k$ converges in law to a Poisson distribution $$\beta^{k} \to \Pois ( \mu)$$ with mean
$$\mu =  \frac{ \left (\frac{k}{2}+1 \right)^{k/2}}{(k+1)!} e^{-c}.$$  
In particular, $$\prob [ H^k (X, \Q) = 0 ] \to \exp \left( - \frac{ (\frac{k}{2}+1)^{k/2}}{(k+1)!} e^{-c}\right)$$
as $n \to \infty$.
\end{conjecture}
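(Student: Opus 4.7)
The plan is to prove the stronger statement that $\beta^k = N_{k+1}$ with high probability, which combined with Theorem \ref{thm:midp} immediately yields the conjectured Poisson limit for $\beta^k$. Let $\sigma_1,\dots,\sigma_m$ denote the maximal $k$-faces of $X$ (equivalently, the maximal $(k+1)$-cliques), so $m = N_{k+1}$, and let $X_0 := X \setminus \bigsqcup_i \mathrm{int}(\sigma_i)$. Since each $\sigma_i$ is a $k$-simplex and $X_0 \cap \sigma_i = \partial \sigma_i$, a Mayer--Vietoris computation (using $\widetilde{H}^{k-1}(\partial \sigma_i) \cong \Q$ and $H^j(\sigma_i) = 0$ for $j \ge 1$) yields
\begin{equation*}
H^{k-1}(X,\Q) \to H^{k-1}(X_0,\Q) \to \Q^m \to H^k(X,\Q) \to H^k(X_0,\Q) \to 0,
\end{equation*}
so it suffices to show that the three outer groups vanish with high probability.

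The $(k-1)$-degree vanishings are already essentially in hand. At $p = \bar{p}$ one computes $np^k \sim (k/2+1)^{k/(k+1)}(\log n)^{k/(k+1)} n^{1/(k+1)} = \omega(\log n)$, so $p$ lies far above the threshold $\left(\left(\tfrac{k+1}{2}+\epsilon\right)\log n/n\right)^{1/k}$ of part (1) of Theorem \ref{thm:main} applied at index $k-1$; hence $H^{k-1}(X,\Q) = 0$ w.h.p. For $H^{k-1}(X_0,\Q) = 0$, the Wielandt--Hoffman argument proving Lemma \ref{lem:del} already handles the removal of a single $\sigma_i$; since $N_{k+1}$ is bounded in probability by Theorem \ref{thm:midp} and each removal perturbs each $(k-2)$-face link Laplacian's Frobenius norm by $o(1)$, the simultaneous perturbation still preserves $\lambda_2 > 1 - 1/k$ uniformly, and the cohomology vanishing theorem applied to $X_0$ at index $k-1$ gives the result.

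The crux is $H^k(X_0,\Q) = 0$. The key observation is that for every $(k-1)$-face $\tau \in X$, an isolated vertex $v$ of $\lk_X(\tau)$ corresponds exactly to $\tau \cup \{v\}$ being one of the maximal $k$-faces $\sigma_i$; hence $\lk_{X_0}(\tau)$ is obtained from $\lk_X(\tau)$ by deleting its isolated vertices. I would then invoke the sharpened spectral gap theorem announced in the footnote---a ``deleted'' strengthening of Hoffman--Kahle--Paquette asserting that for $G \sim G(N,q)$ with $Nq$ in the critical connectivity window, the graph obtained by removing all isolated vertices is connected with $\lambda_2 = 1 - o(1)$, with probability $1 - o(N^{-\alpha})$ for any fixed $\alpha$. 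Choosing $\alpha$ large enough to beat the union bound over the $O(n^k)$ faces $\tau$ shows that every $(k-1)$-face link in $X_0$ is connected with spectral gap $> k/(k+1)$, and applying the Garland--Ballmann--\'Swi\k{a}tkowski cohomology vanishing theorem to the pure $(k+1)$-dimensional core of $X_0$ (obtained by iteratively trimming lower-dimensional faces not contained in a $(k+1)$-face, a pruning which does not affect $H^k$ since the relevant cochain groups and coboundaries are unchanged) yields $H^k(X_0,\Q) = 0$.

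The main obstacle is exactly this refined spectral gap statement. The Hoffman--Kahle--Paquette theorem cited in Section \ref{sec:pfmain} requires $G$ itself to be connected, which fails at $p = \bar{p}$ for the links of those $(k-1)$-faces sitting inside a maximal $(k+1)$-clique, and it does not quantitatively separate the gap from the $k/(k+1)$ threshold. The ``deleted'' version---spectral gap after removing isolated vertices, with arbitrarily strong polynomial tail---is precisely the missing ingredient promised in the footnote. Once it is in hand, the remaining pieces (the Mayer--Vietoris bookkeeping, verification that the pure core of $X_0$ is w.h.p.\ nonempty everywhere in dimensions $\le k+1$, and the routine propagation of Lemma \ref{lem:del} to the simultaneous deletion of all $\sigma_i$) are standard.
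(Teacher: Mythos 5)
This statement is a \emph{conjecture} in the paper, not a theorem; the paper offers no proof, only a footnote indicating that an improved spectral gap theorem (in forthcoming joint work with Hoffman and Paquette) will be strong enough to establish it. So there is no argument in the paper to compare against, and your proposal is best assessed on its own terms.

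Your outline is the natural one, and it matches the paper's own remark that ``the conjecture is equivalent to saying that for the given range of $p$, w.h.p.\ characteristic functions on isolated $k$-faces generate rational cohomology.'' The long exact sequence of the pair $(X, X_0)$ together with excision correctly reduces the claim $\beta^k = N_{k+1}$ to the vanishing of $H^{k-1}(X_0,\Q)$ and $H^k(X_0,\Q)$; combined with Theorem~\ref{thm:midp} and the elementary fact that $\prob[A_n = B_n] \to 1$ and $B_n \Rightarrow Z$ imply $A_n \Rightarrow Z$, this would give the Poisson limit. Your handling of $H^{k-1}(X_0)$ via Wielandt--Hoffman, noting that $N_{k+1}$ is tight while each edge deletion shifts a link Laplacian by $O(1/(np^k)) = O(n^{-1/(k+1)+o(1)})$ in Frobenius norm, is sound. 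Your identification of $\lk_{X_0}(\tau)$ as $\lk_X(\tau)$ with isolated vertices removed, and the observation that trimming $X_0$ down to its pure $(k+1)$-dimensional core does not change the $k$-cochain complex, are both correct.

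The genuine gap is exactly the one you flag: the ``deleted'' spectral gap theorem (spectral gap of a random graph after removing isolated vertices, in the regime where the graph itself fails to be connected, with polynomial tail on the failure probability) is not proved anywhere in this paper, and the existing Hoffman--Kahle--Paquette theorem does not apply at the conjectured $p$ because the required inequality $N_\sigma p \ge (\alpha+1)\log N_\sigma + C_\alpha\sqrt{\log N_\sigma}\log\log N_\sigma$ with $\alpha = k(k+3)/2$ fails once the $\sqrt{\log n}\log\log n$ slack in $\bar{p}$ is removed. One secondary caution: your claim that the deleted version should hold ``for any fixed $\alpha$'' is too strong. Heuristically, after deleting isolated vertices the dominant obstruction to connectivity is isolated edges, which occur at rate $\sim N^2 q e^{-2Nq}$, giving a threshold around $Nq \gtrsim \frac{\alpha+1}{2}\log N$; one cannot get an arbitrarily fast polynomial tail for free. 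Fortunately, at the conjectured $p$ one has $N_\sigma p \approx \frac{(k+1)(k+2)}{2}\log N_\sigma$, which comfortably exceeds $\frac{\alpha+1}{2}\log N_\sigma$ for $\alpha = k(k+3)/2$, so the required strength appears achievable --- but it must be proved, not assumed. As you acknowledge, your proposal is a correct and well-organized reduction to this missing ingredient rather than a complete proof.
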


In particular, if Conjecture \ref{con:Poi} is true, then letting $c \to \pm \infty$ arbitrarily slowly would give the correct width of the critical window.

Conjecture \ref{con:Poi} should be compared with Theorem \ref{thm:midp}.  The conjecture is equivalent to saying that for the given range of $p$, w.h.p.\ characteristic functions on isolated $k$-faces generate rational cohomology. The analogous statement is well known for random graphs $G(n,p)$, see for example chapter 10 of Bollob\'as \cite{Bollo}.

\medskip

It would also be interesting to know if Corollary \ref{cor:ddim} can be refined to homotopy equivalence, at least for a slightly smaller range of $p$.

\begin{conjecture} \label{con:torsion}  
Let $k \ge 3$ and $\epsilon > 0$ be fixed.  If
$$ \frac{n^{\epsilon} }{n^{1/k}}
 \le p \le \frac{n^{-\epsilon}}{n^{1/(k+1)}},$$
then w.h.p. $X$ is homotopy equivalent to a bouquet of $k$-spheres.
\end{conjecture}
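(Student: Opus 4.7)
The strategy is to upgrade the rational homotopy equivalence from Corollary~\ref{cor:ddim}(2) to an honest homotopy equivalence by ruling out integral torsion in the range of the conjecture. Throughout, let $X \sim X(n,p)$ with $p$ in the conjectured interval.

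Three ingredients come essentially for free. First, $\pi_1(X) = 0$ w.h.p.: the lower bound $p \ge n^{\epsilon-1/k}$ for $k \ge 3$ easily dominates the threshold $((3+\epsilon)\log n/n)^{1/3}$ of Theorem~\ref{thm:prev1} applied with index $1$. Second, Corollary~\ref{cor:ddim}(1) (which applies on a superset of our range) gives $\widetilde{H}_i(X,\Q)=0$ for $i\ne k$ and $\widetilde{H}_k(X,\Q)\ne 0$. Third, the topological conclusion reduces by Hurewicz plus Whitehead to an integral-homology statement: once one establishes that w.h.p.\ $\widetilde{H}_i(X,\Z)=0$ for $i\ne k$ and $H_k(X,\Z)\cong\Z^m$ is free abelian, iterated Hurewicz gives $\pi_k(X)\cong\Z^m$, and any choice of generators $\bigvee_m S^k \to X$ is a homology isomorphism between simply-connected CW complexes, hence a homotopy equivalence.

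By the universal coefficient theorem, the remaining integral claim is equivalent to a uniform mod-$q$ statement: for every prime $q$, w.h.p.\ $H^i(X,\Z/q)=0$ for $1 \le i \le k-1$ and $\dim_{\Z/q} H^k(X,\Z/q) = \dim_{\Q} H^k(X,\Q)$. The core task is therefore to run the cohomology-vanishing argument of Section~\ref{sec:pfmain} in \emph{every} characteristic, together with the matching extension of Lemma~\ref{lem:del} over $\Z/q$ (to ensure that $H_k(X,\Z)$ itself has no torsion summand). Two plausible routes suggest themselves: (a) a direct cocycle-counting attack in the spirit of Linial--Meshulam--Wallach and of DeMarco--Hamm--Kahn \cite{DHK12}, extending their $k=1$, $q=2$ result to arbitrary $k$ and $q$, and then union-bounding over primes $q$ up to $\mathrm{poly}(n)$ (which suffices because the cellular chain ranks are polynomial in $n$, bounding the order of any torsion subgroup); (b) a ``mod-$q$ Garland'' variant replacing the Hodge-theoretic positivity with a quantitative mod-$q$ link expansion, drawing on recent work on high-dimensional expanders.

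The hard step is unambiguous. The Garland--Ballman--\'Swi\k{a}tkowski argument used in Section~\ref{sec:pfmain} is intrinsically a Hodge-theoretic argument over $\R$: the spectral gap on links translates into contraction of \emph{harmonic} forms via positive-definiteness of an inner product, a structure with no clean analogue in characteristic $q$. Closing that gap uniformly in $q$ is precisely what the existing $k=1$, $q=2$ work of DeMarco--Hamm--Kahn achieves, and precisely what is missing for $k \ge 2$; this is why the corollary in the paper stops at \emph{rational} homotopy equivalence. The improved spectral gap theorem of Hoffman--Kahle--Paquette alluded to in the footnote to Conjecture~\ref{con:Poi} may provide enough slack on local links for route (b) to succeed; absent that, a careful second-moment analysis of mod-$q$ cocycles along the lines of route (a) is the most likely path to the conjecture.
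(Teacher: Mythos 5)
This statement is Conjecture~\ref{con:torsion}; the paper does not prove it, and neither does your proposal. What you have written is a correct and thoughtful \emph{reduction} plus an honest admission of where the argument stops. The reduction itself is sound and matches the paper's own remark following the conjecture: since simply connected Moore spaces are determined up to homotopy by their homology, it suffices to show $\pi_1(X)=0$ (which indeed follows from Theorem~\ref{thm:prev1} in this range), $\widetilde H_i(X,\Z)=0$ for $i\ne k$, and $H_k(X,\Z)$ free. Your identification of the central obstruction is also exactly right: Garland's method, and the Ballman--\'Swi\k{a}tkowski refinement used in Section~\ref{sec:pfmain}, contracts harmonic cochains via a positive-definite inner product, a mechanism with no analogue over $\Z/q$. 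This is precisely why Corollary~\ref{cor:ddim} stops at \emph{rational} homotopy equivalence and why the conjecture is stated as a conjecture. Your two suggested routes (mod-$q$ cocycle counting \`a la Linial--Meshulam--Wallach / DeMarco--Hamm--Kahn, or a ``mod-$q$ Garland'' via quantitative link expansion) are the natural candidates, but you have not carried out either one, so the gap is not closed.

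Two points in your reduction are also slightly off. First, your stated universal-coefficient ``equivalence'' covers only degrees $i\le k$; but the complex $X$ has dimension roughly $2k$ in this range of $p$, and rationally acyclic does not rule out torsion in degrees $k+1,\dots,\dim X$. You need $H^i(X,\Z/q)=0$ for $i>k$ as well, and while the paper's rational vanishing above degree~$k$ (Theorem~\ref{thm:prev3}) uses a collapsibility argument that \emph{would} work integrally, you do not invoke it, so as written the equivalence is incomplete. Second, ``union-bounding over primes $q$ up to $\mathrm{poly}(n)$'' is not the right quantifier: the primes dividing the torsion order can be arbitrarily large (the torsion order is bounded only by something like $n^{\mathrm{poly}(n)}$ via Hadamard on the boundary matrices). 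What is bounded by $\mathrm{poly}(n)$ is the \emph{number} of distinct prime divisors, so what you actually need is a mod-$q$ failure probability that is $o(n^{-C})$ \emph{uniformly in $q$}, and then a union bound over those finitely many unknown primes. Neither issue is fatal to the sketch, but both would need to be repaired in any real attempt.
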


Simplicial complexes and posets in topological combinatorics are often homotopy equivalent to bouquets of $d$-spheres \cite{Forman, Bjorner}, and proving this conjecture might provide a kind of measure-theoretic explanation of the seeming ubiquity of this phenomenon.

Conjecture \ref{con:torsion} is equivalent to showing that integral homology $H_{*}(X, \Z)$ is torsion-free, since (simply connected) Moore spaces are unique up to homotopy equivalence, e.g.\ see example 4.34 in Hatcher \cite{Hatcher}. In contrast, Kalai showed that uniform random $\Q$-acyclic complexes have, on average, enormous torsion groups \cite{Gil}.

\bigskip

For the lower threshold, I would guess that both of the following hold.

\begin{conjecture} \label{con:slight}
Suppose that $k \ge 1$ is fixed.
If $$p =o \left(  \frac{1}{n^{1/k}} \right),$$
then w.h.p.\ $H^k (X, \Q) = 0$.
\end{conjecture}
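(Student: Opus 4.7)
The plan is to apply Forman's discrete Morse theory. Specifically, I would construct, with high probability, an acyclic matching on the face poset of $X$ that pairs every $k$-face with one of its $(k-1)$-subfaces; since such a matching produces no critical $k$-cells, the resulting Morse complex is trivial in degree $k$, so $H^k(X,\Q)=0$. This reduces the problem to a combinatorial matching question about $X$.

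I would begin by analyzing the link structure at dimension $k-1$. For any $(k-1)$-face $\tau \in X$, its link inside the $k$-skeleton of $X$ has the distribution of $\mathrm{Bin}(n-k,p^k)$, whose expectation $\mu = (n-k)p^k$ is $o(1)$ under the hypothesis $p=o(n^{-1/k})$. Chernoff--Poisson bounds together with a union bound over the at most $\binom{n}{k}$ candidate $(k-1)$-faces show that w.h.p.\ every $(k-1)$-face has link degree at most $D(n) = O(\log n/\log\log n)$. A second and more delicate structural input is an \emph{isolation} property: w.h.p.\ no $k$-face of $X$ contains two distinct $(k-1)$-subfaces both of link degree at least $k+2$. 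This should follow from a first-moment count in which the controlling parameter is $np^{k+1}$, which tends to $0$ precisely when $p = o(n^{-1/k})$.

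With this structure in hand, I would verify Hall's condition for the bipartite incidence graph between $k$-faces and $(k-1)$-faces of $X$. Given a set $F$ of $k$-faces, the double-counting identity $(k+1)|F| = \sum_{\tau\in N(F)}\deg_F(\tau)$ shows that $|N(F)| \ge |F|$ whenever the effective maximum degree is at most $k+1$; the only obstruction is a $(k-1)$-face of link degree exceeding $k+1$. For such a ``high-degree'' face $\tau$, isolation guarantees that the $d$ $k$-faces in the star of $\tau$ contribute $kd$ distinct low-degree $(k-1)$-faces not shared with any other high-degree star, so $1+kd \ge d$ gives a local system of distinct representatives. These local SDRs patch together into a global matching saturating every $k$-face. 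Acyclicity on the Hasse diagram can be enforced by processing $k$-faces in a fixed lexicographic order and, within each local cluster, assigning them to a non-$\tau$ subface; since the clusters are essentially stars, the resulting matching is automatically acyclic.

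The main obstacle is the isolation property. In the regime $p=o(n^{-1/k})$ --- unlike the regime $p\le n^{-1/k-\epsilon}$ of Theorem~\ref{thm:prev3} --- the expected number of $(k-1)$-faces of high link degree is polynomially large in $n$, so one cannot dispose of them by a naive Markov bound; instead one must show that they do not cluster together by $k$-face incidences. A Chen--Stein or Janson-type inequality should handle this once the right controlling parameter is identified, but carrying out the bookkeeping uniformly over all $(k-1)$-face pairs appears to be the technical heart of the argument. A conceivable alternative is to sharpen the spectral input of Section~\ref{sec:pfmain} to handle links of expected size $o(1)$, but the Garland machinery seems fundamentally tied to links with a genuine spectral gap and so is unlikely to extend to this sparse regime without significant modification.
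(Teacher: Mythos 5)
This statement is Conjecture~\ref{con:slight} in the paper; the author explicitly poses it as an open problem and offers no proof, noting only that it is ``inspired by'' the work of Kozlov, Cohen et al., and Aronshtam--Linial on cycles in random $d$-complexes. There is therefore no proof in the paper to compare your attempt against, and your proposal should be evaluated as an attack on an open question.

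Your discrete Morse strategy is a reasonable one to try, and it is worth remarking that it would, if successful, prove something strictly stronger than the conjecture: an acyclic matching with no critical $k$-cells forces $H_k(X;\Z)=0$, not merely $H^k(X;\Q)=0$. The Hall-condition reduction and the double-counting identity are sound. However, the argument has a quantitative gap that goes beyond the difficulty you flag at the end. You assert as a ``structural input'' that w.h.p.\ no $k$-face contains two distinct $(k-1)$-subfaces both of link degree at least $k+2$, and suggest this should follow from a first-moment count controlled by $np^{k+1}$. A direct computation contradicts this: the expected number of triples $(\sigma,\tau_1,\tau_2)$ with $\sigma$ a $k$-face and $\tau_1,\tau_2\subset\sigma$ both of link degree $\ge k+2$ is of order
$$n^{k+1}\,p^{\binom{k+1}{2}}\,\bigl(np^k\bigr)^{2(k+1)} \;=\; n^{3(k+1)}p^{5k(k+1)/2},$$
which at $p=n^{-1/k}$ equals $n^{(k+1)/2}\to\infty$, and remains unbounded for, e.g., $p=n^{-1/k}/\log n$. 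So the isolation property with threshold $k+2$ genuinely fails throughout a range of $p$ covered by the conjecture (roughly $n^{-1/k-1/(5k)}\lesssim p=o(n^{-1/k})$), and the failure is not merely that a Markov bound is too crude. To salvage this approach you would need either a much larger degree threshold (with a correspondingly more delicate local SDR argument) or a fundamentally different Hall-condition verification. The acyclicity of the matching is also asserted rather than proved: processing in lexicographic order does not automatically preclude alternating cycles when the local ``clusters'' overlap through shared low-degree $(k-1)$-faces, which the failure of isolation shows will happen. In short, the high-level plan is sensible and your self-assessment of the technical heart is accurate, but the proposed first-moment shortcut to isolation is quantitatively wrong, so the argument as written does not close.
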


\begin{conjecture} \label{con:const} If $k \ge 0$ is fixed and 
$$C_k =\inf \left\{ \lambda > 0 \mid p = \frac{\lambda}{n^{1/k} } \implies  \mbox{ w.h.p. } H^k (X, \Q) \neq 0\right\},$$
then $C_k > 0$.
\end{conjecture}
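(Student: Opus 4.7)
The plan is to establish $C_k > 0$ by exhibiting $\lambda_0 > 0$ such that, for $p = \lambda n^{-1/k}$ with $0 < \lambda < \lambda_0$, the probability $\prob[H^k(X,\Q) \neq 0]$ stays bounded away from $1$ as $n \to \infty$. By Markov's inequality, it suffices to show that $\limsup_n \expect[\beta_k(X_n)] < 1$ for $\lambda$ small enough, since then $\prob[\beta_k = 0] \geq 1 - \expect[\beta_k] > 0$, contradicting ``$H^k \neq 0$ w.h.p.'' and forcing $C_k \geq \lambda_0$.

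The strategy for bounding $\expect[\beta_k]$ is to enumerate minimal combinatorial carriers of nontrivial $k$-cohomology. For $k=1$, these are essentially the isolated induced cycles of length $\geq 4$ in $G \sim G(n,\lambda/n)$ (plus a smaller-order contribution from components of higher cycle rank); a direct count gives $\expect[\text{isolated induced $\ell$-cycles}] \to (\lambda^\ell/(2\ell))\,e^{-\lambda\ell}$, whence
\begin{equation*}
\expect[\beta_1(X)] \leq \sum_{\ell \ge 4} \frac{\lambda^\ell}{2\ell}\, e^{-\lambda\ell} + O(\lambda^4) = O(\lambda^4),
\end{equation*}
which is strictly less than $1$ for $\lambda$ small, confirming $C_1 > 0$. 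For $k \ge 2$, I would classify minimal induced subcomplexes $Y \subset X$ carrying a nontrivial rational $k$-cohomology class by their face vector and clique structure. The smallest for $k=2$ is the boundary of a cross-polytope (the octahedron on $v=6$, $e=12$), contributing $\Theta(\lambda^{12})$ to $\expect[\beta_2]$ at $p=\lambda n^{-1/2}$. More generally, a carrier on $v$ vertices and $e$ edges contributes at most $O(\lambda^e n^{v-e/k})$ in expectation; one verifies that the cross-polytope case $v = 2k+2$, $e = 2k(k+1)$ saturates the relation $v - e/k = 0$, and for larger minimal carriers one expects $v - e/k < 0$, so each carrier type contributes $O(\lambda^{a})$ with $a > 0$, and summing yields $\expect[\beta_k(X)] \to 0$ as $\lambda \to 0$.

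The main obstacle is twofold. First, for $k \ge 2$ the minimal carriers form an infinite family (cross-polytope boundaries, joins, more exotic flag triangulations of $S^k$) whose combinatorics is not systematically classified, and bounding the resulting sum requires sharp enumeration estimates to overcome the combinatorial explosion as $v$ grows. Second, isolated $k$-faces become cocycles only through the global cohomology of $X$ (as exploited in Section \ref{sec:pfmain2}); at density $p = \lambda n^{-1/k}$ the complements $X - \sigma$ are expected to have non-vanishing $H^{k-1}$, but making precise when such isolated faces are coboundaries rather than nontrivial classes requires delicate case analysis of the cochain complex of $X - \sigma$. A potential spectral workaround is to bound $\expect[\beta_k(X)] = \expect[\dim \ker \Delta_k]$ via the trace inequality $\dim \ker \Delta_k \le \mathrm{tr}(e^{-t\Delta_k})$ for the combinatorial Hodge Laplacian $\Delta_k$ and estimate the expected trace directly from the sparse block structure of $\Delta_k$ at this density, bypassing the enumeration at the cost of introducing delicate small-eigenvalue analysis for sparse random operators.
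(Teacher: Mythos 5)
This statement is Conjecture~\ref{con:const} of the paper; the paper does not prove it, so there is no ``paper proof'' to compare against. The only piece of it the paper settles is $k=1$: there $C_1=1$, and the lower bound $C_1 \ge 1$ is obtained not by a first-moment computation but by noting $\beta_1(X)\le\beta_1(G)$ and invoking Pittel's theorem that $\prob[H_1(G(n,c/n))=0]\to\sqrt{1-c}\,e^{c/2+c^2/4}>0$ for fixed $0<c<1$. For $k\ge 2$ the question is open. Your Markov reduction is sound: if $\limsup_n\expect[\beta_k(X_n)]<1$ for every $\lambda<\lambda_0$, then $\liminf_n\prob[\beta_k=0]>0$, so ``$H^k\ne 0$ w.h.p.'' fails for those $\lambda$ and $C_k\ge\lambda_0$. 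Your $k=1$ cycle count is, with details filled in, a first-moment route to $C_1>0$ (weaker than the sharp $C_1=1$, but enough for the conjecture at $k=1$).

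For $k\ge 2$, however, the proposal names the obstacles correctly but does not overcome them, and they are precisely the content of the conjecture. (i) The key step, $\expect[\beta_k]\le\sum_T\expect[N_T]$ over a list of ``minimal carrier'' types $T$, is unjustified: it is not known (and may be false) that a nontrivial class in $H^k$ of a flag complex must be supported on a bounded-size induced subcomplex, and no classification of flag $k$-cocycle carriers exists. (ii) Even granting such a list, you check the balance condition $v(T)-e(T)/k\le 0$ only for the cross-polytope; without controlling \emph{every} carrier and showing the resulting series over types converges and tends to $0$ as $\lambda\to 0$, no bound on $\expect[\beta_k]$ follows. (iii) The coboundary issue you flag is genuine: at $p\ge n^{-1/k+\epsilon}$ the paper controls it (Lemma~\ref{lem:del}) via link spectral gaps, but at $p=\lambda n^{-1/k}$ with $\lambda$ fixed the links of $(k-2)$-faces are empty or have bounded expected degree, so the Garland/spectral-gap machinery is unavailable. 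The proposed heat-kernel bound $\dim\ker\Delta_k\le\mathrm{tr}(e^{-t\Delta_k})$ is a reasonable alternative direction, but as stated it merely transfers the difficulty to quantitative small-eigenvalue control for sparse flag Hodge Laplacians, which is not in the literature. In short, the reduction is fine but the bound on $\expect[\beta_k]$ for $k\ge 2$ --- the whole point --- is not established, which is why the paper leaves this as a conjecture.
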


The lower bound in part (2) of Theorem \ref{thm:main} shows that $C_k \le (k+1)^{1/k}$. This can almost certainly be improved; for example $C_1 \le 1$, since cycles appear w.h.p.\ in the random graph $G(n,p)$ once $p \ge 1 /n$.
On the other hand, if $p = c / n$ with $0 < c < 1$ fixed and $G \sim G(n,p)$, then 
$$ \prob[H_1(G)=0] \to \sqrt{1-c} \, \exp(c/2 + c^2/4 )$$
which is strictly positive for $0 < c< 1$, so in fact $C_1 = 1$; see Pittel \cite{P88} for a proof.

In a series of papers, Kozlov \cite{Kozlov}, Cohen et.\ al.\ \cite{CCFK12}, and most recently Aronshtam and Linial \cite{AL13} have studied the threshold for the appearance of $d$-cycles in the Bernoulli random $d$-complex $Y_d (n,p)$. Conjectures \ref{con:slight} and \ref{con:const} are inspired by the results in these papers.



\section*{Acknowledgements}

I thank Noga Alon, Eric Babson, Chris Hoffman, Roy Meshulam, Elliot Paquette, and Uli Wagner for many helpful conversations.  I am especially grateful to Dave Rosoff and to an anonymous referee for careful readings of earlier drafts, and for several suggestions which substantially improved this article.

I first learned of applications of Garland's method in topological combinatorics from \cite{ABM}, where Aharoni, Berger, and Meshulam establish a global analogue of the cohomology vanishing theorem for flag complexes.

This work was completed when I was a member at IAS, 2010--2011, and I am grateful for having had the opportunity to work in such a rich mathematical environment. I especially thank Bob MacPherson for his encouragement and support during this time.

\bibliographystyle{plain}
\bibliography{flagrefs}

\end{document}